\title{On Laplacian and Signless Laplacian Permanental Polynomials of Some Well-known Graphs}
\author{Sarbari Mitra, Soumya Bhoumik\\
Department of Mathematics\\
Fort Hays State University
}
\newtheorem{thrm}{Theorem}[section]
\newtheorem{lem}[thrm]{Lemma}
\newtheorem{cor}[thrm]{Corollary}
\newtheorem{hey}[thrm]{Remark}
\newtheorem{conj}[thrm]{Conjecture}
 \newcounter{case}
 \renewcommand{\thecase}{\arabic{case}}
\newcounter{subcase}
 \renewcommand{\thesubcase}{\alph{subcase}}
\def\CT{{\rm CT}}
\newcounter{cases}
\newcounter{subcases}
\newenvironment{proof}{\noindent {\sc Proof}.}
                {\phantom{a} \hfill \framebox[2.2mm]{ } \bigskip}
\def\BState{\State\hskip-\ALG@thistlm}
\providecommand{\keywords}[1]{\textbf{\textit{Keywords.}} #1}
\providecommand{\subjclass}[1]{\textbf{\textit{Mathematics Subject Classification.}} #1}
\begin{document}

\pagestyle{plain}

\baselineskip = 1.2\normalbaselineskip

\maketitle

\begin{abstract}

The permanent of an $n \times n$ matrix $M = (m_{ij})$ is defined as $\mathrm{per}(M) = \sum_{\sigma \in S_n} \prod_{i=1}^n m_{i,\sigma(i)}$, where $S_n$ denotes the symmetric group on $\{1,2,\ldots,n\}$. The permanental polynomial of $M$, is defined by $\psi(M;x) = \mathrm{per}(xI_n - M)$. We study two fundamental variants: the Laplacian permanental polynomial $\psi(L(G);x)$ and signless Laplacian permanental polynomial $\psi(Q(G);x)$ of a graph $G$. A graph is said to be {determined} by its (signless) Laplacian permanental polynomial if no other non-isomorphic graph shares the same polynomial. A graph is combinedly determined when isomorphism is guaranteed by the equality of both polynomials. Characterizing which graphs are determined by their(signless) Laplacian permanental polynomials is an interesting problem. This paper investigates the permanental characterization problem for several families of starlike graphs, including: spider graphs (tree), coconut tree, perfect binary tree, corona product of $C_m$ and $K_n$, and $\bar K_n$ for various values of $m$ and $n$. We establish which of these graphs are determined by their Laplacian or signless Laplacian permanental polynomials, and which require both polynomials for complete characterization. We emphasize that in this manuscript, we have considered a few techniques to compute the permanental polynomial of matrices and their propagation.

\end{abstract}

\subjclass{05C31, 05C50, 15A15, 05C60}

\keywords{Laplacian; signless Laplacian; Permanental Polynomial; Starlike graph; Coconut Tree graph; Spider graph; Binary Tree; Corona product.}

\section{Introduction}
All graphs considered throughout the paper are simple and undirected. Let \( G = (V(G), E(G)) \) be a graph with vertex set \( V(G) \) and edge set \( E(G) \) with respective cardinalities $n,m$. The adjacency matrix of $G$, denoted by $A(G )=(a_{ij})_{n\times n}$, is an $n\times n$ symmetric matrix such that $a_{ij}=1$ if vertices $v_i$ and $v_j$ are adjacent and $0$ otherwise. The degree matrix of \( G \), denoted by \( D(G) \), is defined as \( D(G) = \text{diag}(d_1, d_2, \dots, d_n) \), where \( d_i \) represents the degree of vertex \( v_i \). The Laplacian matrix and signless Laplacian matrix of \( G \) are given by $L(G) = D(G)-A(G)$ and $Q(G) = D(G) + A(G)$, respectively. For an \( n \times n \) matrix \( M \) with entries \( m_{ij} \) (\( i, j = 1, 2, \dots, n \)), the permanent of \( M \) is defined as \( \text{per}(M) = \sum_{\sigma\in S_n} \prod_{i=1}^{n} m_{i\sigma(i)}\) (the sum ranges over all permutations \( \sigma \) of \( \{1, 2, \dots, n\} \)). The permanental polynomial of \( M \), denoted by \( \psi(M; x) = \text{per}(xI_n - M)\), where \( I_n \) is the \( n \times n \) identity matrix. Specifically, the polynomials \( \psi(L(G); x) \) and \( \psi(Q(G); x) \) are referred to as the Laplacian permanental polynomial and signless Laplacian permanental polynomial of \( G \), respectively. A graph \( G \) is said to be determined by its Laplacian permanental polynomial (or signless Laplacian permanental polynomial) if every graph with the same Laplacian permanental polynomial (respectively, signless Laplacian permanental polynomial) as \( G \) is isomorphic to \( G \). A graph $G$ is {combinedly determined} by its Laplacian and signless Laplacian permanental polynomials if any graph $H$ satisfying $\psi(L(H);x) = \psi(L(G);x)$ and $\psi(Q(H);x) = \psi(Q(G);x)$ must be isomorphic to $G$.


Valiant \cite{valiant1979complexity} first proved that computing the permanent matrix, even for the $(0,1)$ matrices, is a \#P-complete problem. Turner \cite{turner1968generalized} later introduced the concept of adjacency permanental polynomial of graphs, followed by Merris \cite{merris1981permanental}, Kasum \cite{kasum1981chemical}, Borowiecki \cite{borowiecki1982computing}, and others. The study of Laplacian permanental polynomials of graphs was initiated by Merris et al. \cite{merris1981permanental}, leading to two main research directions. The first focuses on computing the coefficients of Laplacian permanental polynomials for various graphs \cite{merris1982laplacian, brualdi1984permanent,goldwasser1986permanent, bapat1986bound, cash2004lapacian,geng2010further,geng2015permanental}. Signless Laplacian permanental polynomials are, however, comparatively less explored. Faria \cite{faria1985permanental} first considered the signless Laplacian permanental polynomial of $G$, and showed that $\psi(L(G);x) = \psi(Q(G);x)$ when $G$ is a bipartite graph. Furthermore, he discussed the multiplicity of integer roots of $\psi(Q(G);x)$ \cite{faria1995multiplicity}. See \cite{faria1985permanental,li2011permanental,li2012permanental} for more results on the permanental polynomial of the signless Laplacian matrix. In 2017, Liu et al. \cite{liu2017computing} developed recursive methods to compute both Laplacian and signless Laplacian permanental polynomials. 

The second direction focused on characterizing graphs through their (signless) Laplacian permanental polynomials, an approach first investigated by Merris et al. \cite{merris1981permanental}. Liu \cite{liu2019signless} proved that graphs with up to 6 vertices (respectively, 7 vertices) are uniquely determined by their Laplacian (signless Laplacian) permanental polynomials, and linked the polynomial’s constant term to graph connectedness. Later work \cite{liu2022graphs,wu2022further} extended this to paths, cycles, lollipop graphs, and other families via the signless Laplacian permanental polynomial. Khan et al. \cite{khan2023study} showed that star, wheel, friendship, and certain caterpillar graphs are uniquely identified by their Laplacian and signless Laplacian permanental polynomials.

{ In this work, we investigate several families of starlike graphs, beginning with the coconut tree $\CT_{m,n}$, formed by attaching $m$ paths of length $n$ to a central vertex, and the regular spider graph $\mathcal{S}_{n,m}$, which consists of $n$ legs each of length $m$ emanating from a central vertex. We also examine perfect binary trees $T_k$, which are complete binary trees with $2k$ leaves at depth $k$, as well as corona graphs: $C_n \odot K_1$ for $n \geq 3$, obtained by attaching a pendant vertex to each vertex of a cycle $C_n$, and $C_m \odot \bar{K}_n$ for $m = 3, 4$. Our findings demonstrate that these families are uniquely characterized by their (signless) Laplacian permanental polynomials. Specifically, we establish that the spider graphs $\CT_{m,n}$ and $\mathcal{S}_{n,m}$, along with all perfect binary trees, are determined by both their Laplacian and signless Laplacian permanental polynomials. Furthermore, we prove that this characterization also holds for smaller instances, including the perfect binary trees $T_2$ and $T_3$, the cycle coronas $C_n \odot K_1$ with $n \leq 5$, and the graphs $C_n \odot \bar{K}_m$ where $n = 3, 4$.}

\section{Preliminaries}

First, we need to establish some notation and results for working with principal submatrices of graph Laplacians. Given any graph $G$ and a subset of vertices $S \subseteq V(G)$, we will denote the principal submatrix of the Laplacian $L(G)$ by $L_S(G)$, obtained by deleting the rows and columns corresponding to each vertex in $S$. The same notation applies to the signless Laplacian $Q(G)$, where we write $Q_S(G)$ for the corresponding principal submatrix. In particular when $S$ contains just a single vertex $v$, we simplify the notation to $L_v(G)$ (analogously $Q_v(G)$). Similarly, when dealing with an edge $e = \{u,v\} \in E(G)$, we use $L_e(G)$ (respectively $Q_e(G)$) to denote the principal submatrices obtained by removing both endpoints of $e$. The following lemma (\cite{liu2017computing}, Theorems 1.2 and 1.3) establishes recursive formulas for calculating both the Laplacian and signless Laplacian permanental polynomials of a graph.

\begin{lem}\label{reductionlemma}
\begin{enumerate}
    \item[(i)] Let $v$ be a vertex of a graph $G$, $\mathcal{C}_G(v)$ be the set of cycles of $G$ containing $v$, and $N(v)$ be the set of vertices of $G$ adjacent to $v$. Then
    \begin{align*}
    \psi(L(G);x) &= (x - d(v)) \psi(L_v(G)) + \sum_{u \in N(v)} \psi(L_{vu}(G)) 
    + 2 \sum_{C \in \mathcal{C}_G(v)} \psi(L_{V(C)}(G)), \\
    \psi(Q(G);x) &= (x - d(v)) \psi(Q_v(G)) + \sum_{u \in N(v)} \psi(Q_{vu}(G)) 
    + 2 \sum_{C \in \mathcal{C}_G(v)} (-1)^{|V(C)|} \psi(Q_{V(C)}(G)).
    \end{align*}

    \item[(ii)] Let $e = \{u,v\}$ be an edge of a graph $G$ and $G - e$ be the graph obtained by deleting the edge $e$ from $G$. Then
    \begin{align*}
    \psi(L(G);x) &= \psi(L(G - e)) - \psi(L_v(G - e)) - \psi(L_u(G - e)) 
    + 2 \psi(L_e(G)) \\
    & \quad + 2 \sum_{C \in \mathcal{C}_G(e)} \psi(L_{V(C)}(G)), \\
    \psi(Q(G);x) &= \psi(Q(G - e)) - \psi(Q_v(G - e)) - \psi(Q_u(G - e)) 
    + 2 \psi(Q_e(G)) \\
    & \quad + 2 \sum_{C \in \mathcal{C}_G(e)} (-1)^{|V(C)|} \psi(Q_{V(C)}(G)).
    \end{align*}
\end{enumerate}
\end{lem}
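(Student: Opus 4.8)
\section*{Proof proposal}

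The plan is to derive both identities from a single principle: expand the permanent $\mathrm{per}(xI_n - M)$ directly as a sum over permutations $\sigma \in S_n$ and exploit the multilinearity of the permanent in the rows of its argument. Throughout I write $N = xI_n - L(G)$, whose diagonal entry at a vertex $w$ is $x - d(w)$ and whose off-diagonal $(w,w')$ entry equals $a_{ww'}$ (that is, $1$ exactly when $w \sim w'$); for the signless case $xI_n - Q(G)$ the only change is that each off-diagonal entry becomes $-a_{ww'}$, and this is the source of all the extra signs.

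For part (i), I would classify each permutation $\sigma$ by the cycle of $\sigma$ containing $v$. If $v$ is a fixed point, the factor $N_{vv} = x - d(v)$ splits off and the remaining rows range over all permutations of $V(G)\setminus\{v\}$, producing $(x-d(v))\psi(L_v(G))$. If $v$ lies in a $2$-cycle $(v\,u)$, a nonzero product forces $N_{vu}N_{uv}\neq 0$, hence $u \in N(v)$, contributing $\psi(L_{vu}(G))$; summing over neighbours gives the middle term. If $v$ lies in a $\sigma$-cycle of length $\geq 3$, its supporting entries must all be edges, so the cycle is exactly a graph cycle $C \in \mathcal{C}_G(v)$; each undirected $C$ arises from its two orientations (explaining the factor $2$), and the uninvolved rows yield $\psi(L_{V(C)}(G))$. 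For $Q(G)$ every off-diagonal factor carries a sign, so a transposition contributes $(-1)^2 = 1$ while a cycle on $|V(C)|$ vertices contributes $(-1)^{|V(C)|}$, giving the signless formula.

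For part (ii), I would use the rank-one relation $L(G) = L(G-e) + (\varepsilon_u - \varepsilon_v)(\varepsilon_u - \varepsilon_v)^{\top}$, where $\varepsilon_w$ is the $w$-th standard basis vector, so the perturbation touches only rows and columns $u,v$. With $N' = xI_n - L(G-e)$, expanding $\mathrm{per}(N)$ multilinearly in rows $u$ and $v$ and splitting each perturbed row along $\varepsilon_u$ and $\varepsilon_v$ produces a handful of elementary terms. The unperturbed choice returns $\psi(L(G-e))$; selecting the basis vector matching the row index (row $u \mapsto \varepsilon_u$, resp.\ row $v \mapsto \varepsilon_v$) forces a fixed point and contributes $-\psi(L_u(G-e))$, resp.\ $-\psi(L_v(G-e))$; perturbing both rows simultaneously gives $2\psi(L_e(G))$, the surviving configurations being the double fixed point and the transposition $u\leftrightarrow v$ (the repeated-column choices vanish by injectivity). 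The only remaining terms are the two crossed selections (row $u \mapsto \varepsilon_v$ with row $v$ unperturbed, and its mirror), which are precisely the permanents of the non-principal minors $N'[\hat u \mid \hat v]$ and $N'[\hat v \mid \hat u]$ obtained by deleting row $u$, column $v$ (resp.\ row $v$, column $u$).

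The crux, and the step I expect to be the main obstacle, is the combinatorial evaluation of these two non-principal minors. A nonzero term of $\mathrm{per}(N'[\hat u \mid \hat v])$ corresponds to a permutation $\pi$ with $\pi(u)=v$ whose $u$-cycle traces a path $v, w_1, \dots, w_k, u$ through edges of $G-e$; closing it with $e$ yields exactly a cycle $C \in \mathcal{C}_G(e)$, and the rows outside $V(C)$ contribute $\psi(L_{V(C)}(G))$ (using that $\psi(L_{V(C)}(G-e)) = \psi(L_{V(C)}(G))$, since deleting $V(C)\supseteq\{u,v\}$ erases the degree change caused by removing $e$). Each cycle through $e$ is counted once by each minor, one per orientation of $e$, assembling into $2\sum_{C \in \mathcal{C}_G(e)}\psi(L_{V(C)}(G))$. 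The signless case runs identically from $Q(G) = Q(G-e) + (\varepsilon_u + \varepsilon_v)(\varepsilon_u + \varepsilon_v)^{\top}$; the only care needed is sign bookkeeping: the $|V(C)|-1$ surviving off-diagonal factors in each minor supply $(-1)^{|V(C)|-1}$, and the leading sign of the perturbation flips the cross-terms, so together they deliver the required $+2\sum_{C}(-1)^{|V(C)|}\psi(Q_{V(C)}(G))$.
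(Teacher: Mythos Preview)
Your argument is correct. The permutation-cycle classification in part (i) is the standard route and is carried out cleanly; the sign bookkeeping for $Q(G)$ is right. For part (ii), the rank-one identities $L(G)=L(G-e)+(\varepsilon_u-\varepsilon_v)(\varepsilon_u-\varepsilon_v)^{\top}$ and $Q(G)=Q(G-e)+(\varepsilon_u+\varepsilon_v)(\varepsilon_u+\varepsilon_v)^{\top}$ do exactly what you claim: the nine-term multilinear expansion in rows $u$ and $v$ collapses to the five displayed summands, the two repeated-column choices vanish, and the two cross minors $\mathrm{per}(N'[\hat u\mid \hat v])$ and $\mathrm{per}(N'[\hat v\mid \hat u])$ are correctly identified with the cycle sums (each undirected $C\in\mathcal{C}_G(e)$ contributes once per minor via its unique orientation with $\sigma(u)=v$, respectively $\sigma(v)=u$, and $L_{V(C)}(G-e)=L_{V(C)}(G)$ because $u,v\in V(C)$). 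The signless sign count---$(-1)^{|V(C)|-1}$ from the $|V(C)|-1$ off-diagonal factors on the $G-e$ path, times the extra $-1$ from the perturbation---indeed yields $(-1)^{|V(C)|}$.

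As for comparison: the paper does not supply its own proof of this lemma. It is quoted verbatim as Theorems~1.2 and~1.3 of Liu--Wu (2017) and used as a black box throughout. So you have gone further than the paper by providing a self-contained derivation; there is no in-paper argument to weigh your approach against.
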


The next result demonstrates how key graph parameters can be recovered from the Laplacian and signless Laplacian permanental polynomials.

\begin{lem}\cite{liu2022graphs}\label{firstthreevalues}
    Let $G$ be a graph with $n$ vertices and $m$ edges, and let $(d_1, d_2, \dots, d_n)$ be its degree sequence. 
Suppose that 
\[
\psi(L(G); x) = \sum_j (-1)^j l_j(G) x^{n-j} 
\quad \text{and} \quad
\psi(Q(G); x) = \sum_j (-1)^j q_j(G) x^{n-j}.
\]
Then
\[
l_0(G) = q_0(G) = 1,
\]
\[
l_1(G) = q_1(G) = 2m,
\]
\[
l_2(G) = q_2(G) = 2m^2 + m - \frac{1}{2} \sum_{i=1}^n d_i^2,
\]
\[
l_3(G) = \frac{1}{3} \left(
-6 t_G + 6 m^2 - 3 \sum_{r=1}^n d_r^2 + 4 m^3 
- 3 m \sum_{r=1}^n d_r^2 + \sum_{r=1}^n d_r^3
\right),
\]
\[
q_3(G) = \frac{1}{3} \left(
6 t_G + 6 m^2 - 3 \sum_{r=1}^n d_r^2 + 4 m^3 
- 3 m \sum_{r=1}^n d_r^2 + \sum_{r=1}^n d_r^3
\right),
\]

where $t_G$ denotes the number of triangles $(C_3)$ in $G$.
\end{lem}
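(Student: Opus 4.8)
The plan is to reduce everything to a single combinatorial identity: the coefficients of a permanental polynomial are signed sums of permanents of principal submatrices. Writing $M[S]$ for the principal submatrix of $M$ on the index set $S$, multilinearity of the permanent in the rows of $xI_n - M$ gives
\[
\psi(M;x) = \operatorname{per}(xI_n - M) = \sum_{j=0}^n (-1)^j \Bigl(\sum_{|S| = j} \operatorname{per}(M[S])\Bigr) x^{n-j},
\]
since forcing $\sigma(i) = i$ on a chosen set $T$ of rows contributes $x^{|T|}$ times $\operatorname{per}\bigl((-M)[[n]\setminus T]\bigr)$, and the leftover sign is $(-1)^{|S|}$. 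Matching coefficients with the stated expansions yields $l_j(G) = \sum_{|S|=j}\operatorname{per}(L(G)[S])$ and $q_j(G) = \sum_{|S|=j}\operatorname{per}(Q(G)[S])$, so each $l_j, q_j$ is a sum of permanents of $j \times j$ principal blocks. Throughout I will use that both $L(G)$ and $Q(G)$ have diagonal entry $d_i$ at vertex $i$ and off-diagonal entry $-a_{ik}$ (for $L$) or $+a_{ik}$ (for $Q$), and that $a_{ik}^2 = a_{ik}$.

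I would then evaluate this for $j = 0, 1, 2, 3$. The cases $j = 0$ (empty block, permanent $1$) and $j = 1$ (diagonal entries summing to $\sum_i d_i = 2m$) are immediate and give $l_0 = q_0 = 1$ and $l_1 = q_1 = 2m$. For $j = 2$, the permanent of the $2\times 2$ block on $\{i,k\}$ is $d_i d_k + a_{ik}^2 = d_i d_k + a_{ik}$ for both $L$ and $Q$; summing and using $\sum_{i<k} d_i d_k = \tfrac12\bigl((\sum_i d_i)^2 - \sum_i d_i^2\bigr) = 2m^2 - \tfrac12\sum_i d_i^2$ together with $\sum_{i<k} a_{ik} = m$ yields the claimed common value $l_2 = q_2$.

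The substantive step is $j = 3$. Expanding the permanent of the $3\times 3$ principal block on $\{i,j,k\}$ gives, for the Laplacian,
\[
\operatorname{per}(L[\{i,j,k\}]) = d_i d_j d_k + a_{ij} d_k + a_{ik} d_j + a_{jk} d_i - 2\,a_{ij}a_{jk}a_{ik},
\]
while the signless version differs only in the final sign, $+2\,a_{ij}a_{jk}a_{ik}$, because the two $3$-cycle terms of the permanent pick up $(-1)^3$ for $L$ but $(+1)$ for $Q$. I then sum each piece over all triples: the product term gives the elementary symmetric function $e_3(d_1,\dots,d_n)$, which Newton's identities rewrite as $\tfrac16\bigl((2m)^3 - 3(2m)\sum_r d_r^2 + 2\sum_r d_r^3\bigr)$; the ``degree opposite the edge'' terms reorganize as $\sum_{e=\{p,q\}} \sum_{r \neq p,q} d_r = \sum_{e=\{p,q\}}(2m - d_p - d_q) = 2m^2 - \sum_r d_r^2$, using $\sum_{e=\{p,q\}}(d_p+d_q) = \sum_r d_r^2$; and the triangle term gives $\mp 2 t_G$, since $a_{ij}a_{jk}a_{ik} = 1$ exactly when $\{i,j,k\}$ spans a triangle. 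Adding these contributions and clearing the common factor $\tfrac13$ produces the stated formulas, the sign of the $t_G$ term being the sole difference between $l_3$ and $q_3$.

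The main obstacle is the $j=3$ computation, and within it two bookkeeping points: getting the sign of the $a_{ij}a_{jk}a_{ik}$ term right, which is precisely what separates $l_3$ from $q_3$, and the reindexing that converts $\sum_{\{i,j,k\}}(a_{ij}d_k + a_{ik}d_j + a_{jk}d_i)$ into a sum over edges. Everything else is routine symmetric-function manipulation through Newton's identities.
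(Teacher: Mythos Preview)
Your argument is correct and complete. The key identity $\psi(M;x) = \sum_{j}(-1)^j\bigl(\sum_{|S|=j}\operatorname{per}(M[S])\bigr)x^{n-j}$ is exactly right, and your computations for $j=0,1,2,3$ all check: the $3\times 3$ permanent expansion, the Newton-identity rewriting of $e_3$, the edge-reindexing of the mixed terms via $\sum_{e=\{p,q\}}(d_p+d_q)=\sum_r d_r^2$, and the sign bookkeeping that distinguishes $l_3$ from $q_3$ are all handled correctly.

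There is nothing to compare against here: the paper does not prove this lemma but quotes it from \cite{liu2022graphs} as a known result, so your self-contained derivation actually supplies more than the paper itself does.
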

The following results are immediate consequences of the Lemma \ref{firstthreevalues}.

\begin{cor}\cite{liu2022graphs}\label{corolarydetermination}
For any graph $G$ with $t(G)$ triangles and $d_i$ being the degree of the vertex $v_i$ of $G$.
\begin{enumerate}
    \item[(i)] Both $\psi(L(G);x)$ and $\psi(Q(G);x)$ determine $\vert V(G)\vert, \vert E(G)\vert$, and $\sum_i d_i^2$ (sum of the square of the degrees of all vertices).
    \item[(ii)] Additionally $\psi(L(G);x)$ determines $-6t(G) + \sum_i d_i^3$, whereas $\psi(Q(G);x)$ determines $6t(G) + \sum_i d_i^3$.
\end{enumerate}
\end{cor}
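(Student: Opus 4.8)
The plan is to treat this corollary as a direct algebraic consequence of the coefficient formulas recorded in Lemma \ref{firstthreevalues}, exploiting the basic fact that a monic polynomial is completely determined by, and completely determines, its sequence of coefficients. The key preliminary observation is that $\psi(L(G);x) = \mathrm{per}(xI_n - L(G))$ is monic of degree exactly $n = |V(G)|$: in the permanent expansion each permutation $\sigma$ contributes a factor $x - d(v_i)$ for every fixed point and a constant off-diagonal factor otherwise, so the only term of degree $n$ comes from the identity permutation. The same holds for $\psi(Q(G);x)$. Hence either permanental polynomial immediately returns $|V(G)|$ as its degree, and with $n$ in hand each coefficient $l_j(G)$ (respectively $q_j(G)$) is recovered as $(-1)^j$ times the coefficient of $x^{n-j}$.

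For part (i), I would first read off $|E(G)|$ from the linear coefficient: since $l_1(G) = q_1(G) = 2m$, we obtain $m = l_1(G)/2 = q_1(G)/2$, so both polynomials yield the same value of $m$. Next I would invert the quadratic coefficient. Because $l_2(G) = q_2(G) = 2m^2 + m - \tfrac12\sum_i d_i^2$ and $m$ is already known, solving for the sum of squared degrees gives
\[
\sum_{i=1}^n d_i^2 = 4m^2 + 2m - 2\,l_2(G) = 4m^2 + 2m - 2\,q_2(G),
\]
so $\sum_i d_i^2$ is determined by either polynomial. This settles (i), since $|V(G)|$, $|E(G)|$, and $\sum_i d_i^2$ are all expressed in terms of quantities read directly off the coefficients.

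For part (ii), I would apply the same idea to the degree-three coefficients, where the only remaining unknown is the combination involving triangles. Multiplying the formula for $l_3(G)$ by $3$ and isolating the triangle/cube terms yields
\[
-6\,t(G) + \sum_{r=1}^n d_r^3 = 3\,l_3(G) - 6m^2 + 3\sum_{r=1}^n d_r^2 - 4m^3 + 3m\sum_{r=1}^n d_r^2,
\]
and every term on the right besides $l_3(G)$ itself is a polynomial expression in $m$ and $\sum_r d_r^2$, both already fixed in (i); hence $-6t(G) + \sum_r d_r^3$ is determined by $\psi(L(G);x)$. The identical manipulation applied to $q_3(G)$, whose only difference is the sign of the triangle term, gives
\[
6\,t(G) + \sum_{r=1}^n d_r^3 = 3\,q_3(G) - 6m^2 + 3\sum_{r=1}^n d_r^2 - 4m^3 + 3m\sum_{r=1}^n d_r^2,
\]
which is therefore determined by $\psi(Q(G);x)$.

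Since the entire argument is a matter of inverting three coefficient identities over already-known quantities, there is no genuine obstacle here; the only point requiring care is the bookkeeping of signs in the degree-three formulas and confirming that $m$ and $\sum_i d_i^2$ are established \emph{before} the cubic coefficient is inverted, so that the residual combination involving $t(G)$ is isolated cleanly and the two permanental polynomials are seen to differ precisely in the sign attached to $t(G)$.
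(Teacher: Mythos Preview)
Your proposal is correct and matches the paper's approach: the paper itself gives no separate proof, simply noting that the corollary is an immediate consequence of Lemma~\ref{firstthreevalues}, and your argument is precisely the straightforward inversion of those coefficient identities that makes ``immediate'' explicit.
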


The following result first appeared in \cite{faria1985permanental}, though an alternative proof was later presented in \cite{liu2017computing}.

\begin{thrm}
For any bipartite graph $G$, the Laplacian permanental polynomial $\psi(L(G);x)$ coincides with its signless Laplacian permanental polynomial $\psi(Q(G);x)$.
\end{thrm}

\section{Spider Tree}
This section examines two specific types of spider trees: the coconut tree, analyzed in Section \ref{coco}, and the regular spider tree, discussed in Section \ref{spider}.

\subsection{coconut Tree Graphs} \label{coco}
A coconut tree $\CT_{m,n}$ is the graph obtained from the path $P_m$ by appending $n$ pendant edges at an end vertex of $P_m$, for all positive integers $m,n\ge 2$. We identify the vertices of a coconut tree graph as $V(G)= \{u_1,u_2,\cdots,u_m\}\cup \{v_1,v_2,\cdots,v_n\}$, where $u_1,u_2,\cdots,u_m$ are the vertices of path $P_m$ and $v_1,v_2,\cdots,v_n$ are the pendant vertices adjacent to $u_1$. We note that $\CT_{m,n}$ for $m<3$ is simply the star graph $S_n$, which has been covered in the paper by \cite{khan2023study, liu2019signless}. We follow the notation of \cite{khan2023study} where $d_{\max}$ denotes the maximum degree of vertices in $G$, and $k_i$ denotes the number of vertices of degree $i$ in $G$, $i= 0,1,\cdots, d_{\max}$.

\begin{thrm}
For the coconut Tree graph $\CT_{3,n}$,
\begin{multline*}
\psi(L(\CT_{3,n}); x) = \psi(Q(\CT_{3,n}); x) = x^{n+3} - (2n+4) x^{n+2} + \left( \frac{3n^2 +15n + 14}{2} \right) x^{n+1} \\ - \frac{2n^3 +15n^2 + 31n + 12)}{3} x^{n} + \cdots + 2(3n+2)(-1)^n
\end{multline*}
\end{thrm}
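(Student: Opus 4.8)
The plan is to collapse the entire computation onto two small permanents by exploiting the structure of the pendant block. First observe that $\CT_{3,n}$ is a tree, hence bipartite, so the theorem on bipartite graphs quoted above gives $\psi(L(\CT_{3,n});x) = \psi(Q(\CT_{3,n});x)$ for free; it then remains only to compute $\psi(L(\CT_{3,n});x) = \operatorname{per}(xI-L)$. I order the vertices as $u_1, u_2, u_3, v_1, \dots, v_n$, so that the row of $xI-L$ indexed by each pendant $v_i$ has exactly two nonzero entries: the diagonal $x-1$ and a $1$ in the column of $u_1$ (since $d(u_1)=n+1$, $d(u_2)=2$, $d(u_3)=d(v_i)=1$).

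The key combinatorial observation is that in any permutation $\sigma$ contributing a nonzero product to the permanent, each $v_i$ maps either to itself or to $u_1$; since $\sigma$ is a bijection the column of $u_1$ is used at most once, so at most one pendant behaves the second way, and when $\sigma(v_i)=u_1$ the column structure forces $\sigma(u_1)=v_i$ as well, a transposition of $u_1$ and $v_i$. This splits the permanent into two cases. In Case~A every pendant sits on the diagonal, contributing $(x-1)^n$ times the permanent of the $3\times 3$ principal submatrix $M_3$ of $xI-L$ on $\{u_1,u_2,u_3\}$. In Case~B one of the $n$ pendants is transposed with $u_1$, contributing $n(x-1)^{n-1}$ times the permanent of the $2\times 2$ principal submatrix $M_2$ on $\{u_2,u_3\}$. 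Equivalently, this is exactly the decomposition obtained by applying Lemma~\ref{reductionlemma}(i) successively at the pendant vertices, the cycle terms vanishing because the graph is a tree. Hence
\begin{equation*}
\psi(L(\CT_{3,n});x) = (x-1)^n \operatorname{per}(M_3) + n(x-1)^{n-1}\operatorname{per}(M_2).
\end{equation*}

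Next I would evaluate the two small permanents directly: $\operatorname{per}(M_2) = (x-2)(x-1)+1 = x^2-3x+3$, and $\operatorname{per}(M_3) = (x-n-1)(x-2)(x-1)+(x-n-1)+(x-1) = x^3-(n+4)x^2+(3n+7)x-(3n+4)$. Substituting and factoring out $(x-1)^{n-1}$ gives
\begin{equation*}
\psi(L(\CT_{3,n});x) = (x-1)^{n-1}\bigl[x^4-(n+5)x^3+(5n+11)x^2-(9n+11)x+2(3n+2)\bigr].
\end{equation*}
The final step is to expand $(x-1)^{n-1}$ by the binomial theorem and collect coefficients. The top coefficient is $1$; the coefficient of $x^{n+2}$ is $-(n+5)-(n-1)=-(2n+4)$; one checks that the coefficient of $x^{n+1}$ equals $(5n+11)+(n-1)(n+5)+\binom{n-1}{2}$, which simplifies to $\tfrac{1}{2}(3n^2+15n+14)$; and the coefficient of $x^n$ collects four binomial contributions into $-\tfrac{1}{3}(2n^3+15n^2+31n+12)$, matching the stated polynomial.

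The binomial bookkeeping is routine; the step that needs the most care is the sign of the lowest-order (constant) term, which is governed by the parity factor $(-1)^{|V(\CT_{3,n})|}=(-1)^{n+3}$ interacting with the constant $2(3n+2)$ of the quartic and the $(-1)^{n-1}$ constant of $(x-1)^{n-1}$. I would pin this down by the independent evaluation $\psi(L;0)=\operatorname{per}(-L)=(-1)^{n+3}\operatorname{per}(L)$, reading $\operatorname{per}(L)$ off the same two-case decomposition at $x=0$. The only genuine subtlety in the argument is that the principal submatrices $M_2,M_3$ are \emph{not} Laplacians of subgraphs — the diagonal entry $x-(n+1)$ at $u_1$ retains the degree inherited from the ambient graph — so one must resist treating the reduction as deletion of vertices from a smaller coconut tree.
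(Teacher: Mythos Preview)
Your proof is correct, and in fact it catches a sign error in the stated constant term: your factored form $(x-1)^{n-1}\bigl[x^4-(n+5)x^3+(5n+11)x^2-(9n+11)x+2(3n+2)\bigr]$ gives constant $(-1)^{n-1}\cdot 2(3n+2)$, not $(-1)^n\cdot 2(3n+2)$, and a direct check at small $n$ (e.g.\ $n=2$ gives $-16$, $n=3$ gives $+22$) confirms that the theorem's displayed exponent is off by one.

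Your route differs from the paper's. The paper applies Lemma~\ref{reductionlemma}(i) once at the \emph{central} vertex $u_1$, producing three principal-submatrix permanents $\psi(L_{u_1})$, $\psi(L_{u_1u_2})$, $\psi(L_{u_1v_1})$ (each computed separately, involving $(x-2)(x-1)^{n+1}+(x-1)^n$ and the like), then substitutes and simplifies. You instead peel off the \emph{pendant} vertices --- equivalently, you iterate the same lemma at each leaf $v_i$, or argue directly from the two-entry pendant rows of $xI-L$ --- which collapses the whole permanent onto just the $3\times 3$ and $2\times 2$ principal blocks on $\{u_1,u_2,u_3\}$ and $\{u_2,u_3\}$. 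Your approach immediately exhibits the factor $(x-1)^{n-1}$ and reduces all the algebra to a single quartic, making the coefficient extraction via the binomial theorem transparent; the paper's approach hides this factorization inside ``simplifying yields the desired result.'' Your closing warning that $M_2,M_3$ retain the ambient degrees (so are not Laplacians of subgraphs) is exactly the right caveat for this kind of reduction.
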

\begin{proof}
Since $\CT_{3,n}$ is a bipartite graph, $\psi(L(\CT_{3,n});x)=\psi(Q(\CT_{3,n});x)$. Applying Lemma \ref{reductionlemma} to $\psi(L(\CT_{3,n});x)$, with $u_1$ as the central vertex, we obtain: 
\begin{equation}\label{CT_{3,n}}
\psi(L(\CT_{3,n}); x) = (x-n-1)\psi(L_{u_1}(\CT_{3,n}); x)+\psi(L_{u_1u_2}(\CT_{3,n}); x)+n\psi(L_{u_1v_1}(\CT_{3,n}); x)
\end{equation} 
We have the following polynomials of the submatrices:
\begin{align*}
\psi(L_{u_1}(\CT_{3,n}); x) &= (x-2)(x-1)^{n+1} + (x-1)^n, \\
\psi(L_{u_1u_2}(\CT_{3,n}); x) &= (x-1)^{n+1}, \\
\psi(L_{u_1v_1}(\CT_{3,n}); x) &= (x-2)(x-1)^n + (x-1)^{n-1}.
\end{align*}

\noindent Substituting these into Eqn.(\ref{CT_{3,n}}) and simplifying yields the desired result.
\end{proof}

\begin{thrm}\label{CT{3,n}}
 $\CT_{3,n}$ is determined by its (signless) Laplacian permanental polynomial.     
\end{thrm}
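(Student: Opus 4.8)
The plan is to show that any graph $H$ satisfying $\psi(L(H);x) = \psi(L(\CT_{3,n});x)$, and likewise any $H$ with $\psi(Q(H);x) = \psi(Q(\CT_{3,n});x)$, must be isomorphic to $\CT_{3,n}$. Since $\CT_{3,n}$ is bipartite, the two polynomials coincide, so a single argument handles both directions. First I would extract the elementary invariants via Corollary \ref{corolarydetermination}: from the shared polynomial, $|V(H)| = n+3$, $|E(H)| = n+2$, and $\sum_i d_i^2 = (n+1)^2 + 4 + (n+1) = n^2+3n+6$. In particular $\sum_i d_i = 2(n+2) = 2n+4$ and $|E(H)| = |V(H)| - 1$.

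The decisive step is to pin down the degree sequence of $H$, and the key observation is that this can be done without ever separating the triangle count from $\sum_i d_i^3$. I would first rule out isolated vertices: an isolated vertex produces a zero row in both $L(H)$ and $Q(H)$, so the value of the permanental polynomial at $x=0$, namely $\mathrm{per}(-L(H))$, vanishes. But the preceding theorem gives the constant term $2(3n+2)(-1)^n \neq 0$, so $H$ has no isolated vertex and every degree satisfies $d_i \geq 1$. Writing $e_i = d_i - 1 \geq 0$, one computes $\sum_i e_i = (2n+4) - (n+3) = n+1$ and $\sum_i e_i^2 = (n^2+3n+6) - 2(2n+4) + (n+3) = n^2 + 1$. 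The inequalities $e_{\max}^2 \leq \sum_i e_i^2 = n^2+1$ and $n^2+1 = \sum_i e_i^2 \leq e_{\max}\sum_i e_i = e_{\max}(n+1)$ squeeze $e_{\max}$ to exactly $n$; removing that entry leaves residual sums $\sum e_i = \sum e_i^2 = 1$, which force a single entry equal to $1$ and all others $0$. Hence the degree sequence of $H$ is precisely $(n+1, 2, 1, \ldots, 1)$ with $n+1$ ones, matching $\CT_{3,n}$.

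It then remains to realize this degree sequence by a unique tree. Only two vertices have degree $\geq 2$, so $H$ can contain no cycle (any cycle requires at least three vertices of degree $\geq 2$); thus $H$ is a forest, and together with $|E(H)| = |V(H)| - 1$ it is a tree. Letting $w$ be the vertex of degree $n+1$ and $z$ the vertex of degree $2$, I would show $w$ and $z$ are adjacent: otherwise both neighbours of $z$ and all $n+1$ neighbours of $w$ would be leaves, demanding $n+1+2 > n+1$ leaves, a contradiction. Consequently $z$ has exactly one further neighbour, a leaf $\ell$, and $w$ has exactly $n$ further neighbours, all leaves, producing the path $\ell\,z\,w$ with $n$ pendants at $w$, which is exactly $\CT_{3,n}$.

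I expect the main subtlety to lie in step two: recognizing that the non-vanishing of the constant term is exactly the right tool to exclude isolated vertices, and that once this is secured the two power-sum identities alone determine the degree sequence, with no appeal to the triangle-sensitive coefficients $l_3$ and $q_3$. This is precisely what lets one argument settle both the Laplacian and signless Laplacian statements simultaneously. Beyond verifying the quoted constant-term value and checking that the small boundary case $n = 2$ behaves as claimed, I anticipate no serious difficulty.
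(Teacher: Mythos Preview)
Your proposal is correct and follows a genuinely different path from the paper. The paper extracts all four moments from Lemma~\ref{firstthreevalues}, including $-6t_G + \sum_i d_i^3$, and then verifies the algebraic identity
\[
6t_G \;-\; \sum_{i=0}^{d_{\max}} (i-1)(i-2)(i-n-1)\,k_i \;=\; 0,
\]
whose summands are individually nonnegative (once one has checked, implicitly, that $d_{\max}\le n+1$), forcing $t_G=0$ and $k_3=\cdots=k_n=0$ in one stroke; the degree sequence $(n+1,2,1,\dots,1)$ and the identification with $\CT_{3,n}$ then follow. Your argument, by contrast, never touches the cubic coefficient at all: you use only $|V|$, $|E|$, $\sum d_i^2$, together with the nonvanishing of the constant term to exclude isolated vertices, and then squeeze out the degree sequence via the power-sum inequalities on the shifted degrees $e_i=d_i-1$. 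This is more elementary and has the pleasant side effect that the Laplacian and signless Laplacian cases become literally the same argument, since $l_j=q_j$ for $j\le 2$ and the constant term is fixed by hypothesis in either case. The paper's factored-cubic device, on the other hand, is the template it reuses for $\CT_{4,n}$, $\mathcal S_{n,2}$, and the graphs $C_m\odot\bar K_n$, so its payoff is uniformity across all the families treated there.
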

\begin{proof}
From the above theorem, using the Lemma \ref{firstthreevalues} and Corollary \ref{corolarydetermination}, we have 
\begin{align*}
    \sum_{i=0}^{d_{\max}} k_i = n+3,  \sum_{i=0}^{d_{\max}} i k_i = 2n+4,  \sum_{i=0}^{d_{\max}} i^2 k_i = n^2 + 3n + 6,  -6t_G+ \sum_{i=0}^{d_{\max}} i^3 k_i = n^3 + 3n^2 + 4n + 10. \label{3eq:sum4}
\end{align*}
A key observation follows from these equations: 
\begin{equation}
    6t_G - \sum_{i=0}^{d_{\max}} (i-1)(i-2)(i-n-1)k_i = 0. \label{3eq:key}
\end{equation}
Since each term on the left side is non-negative in Equation \ref{3eq:key}, we conclude $t_G = 0$ and consequently $ k_3 = k_4 = \cdots = k_n = 0$.  Thus, the only non-zero values are: $k_1=n+1, k_2=1$, and $k_{n+1}=1$. Since $G$ has a total of $n+3$ vertices and one vertex $v$ of degree $n+1$, $v$ connects all vertices except one. Now, as there is only one vertex of degree $2$, it is isomorphic to the coconut tree $\CT_{3,n}$.
\end{proof}


\begin{thrm}\label{CT{4,n}}
For the coconut Tree graph $\CT_{4,n}$,
\begin{multline*}
\psi(L(\CT_{4,n}); x) = \psi(Q(\CT_{4,n}); x) = x^{n+4} - (2n+6) x^{n+3} + \left( \frac{3n^2 +23n + 32}{2} \right) x^{n+1} \\ - \frac{2n^3 +24n^2 + 82n + 60)}{3} x^{n} + \cdots + (14n+10)(-1)^n
\end{multline*}
\end{thrm}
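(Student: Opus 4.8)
The plan is to follow the same route as the proof for $\CT_{3,n}$. Since $\CT_{4,n}$ is a tree, hence bipartite, the earlier theorem gives $\psi(L(\CT_{4,n});x) = \psi(Q(\CT_{4,n});x)$, so it suffices to compute the Laplacian permanental polynomial. I would apply the vertex recursion in Lemma \ref{reductionlemma}(i) at the central vertex $u_1$, which has degree $n+1$ and neighborhood $N(u_1) = \{u_2, v_1, \dots, v_n\}$. Because $\CT_{4,n}$ is acyclic, the cycle sums vanish, and the $n$ pendant edges $u_1 v_i$ are interchangeable, so the recursion collapses to
\begin{equation*}
\psi(L(\CT_{4,n});x) = (x-n-1)\,\psi(L_{u_1}(\CT_{4,n})) + \psi(L_{u_1u_2}(\CT_{4,n})) + n\,\psi(L_{u_1v_1}(\CT_{4,n})).
\end{equation*}

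Next I would evaluate the three principal submatrix polynomials, exploiting that the permanent is multiplicative over a block-diagonal decomposition. Deleting $u_1$ isolates every pendant $v_i$ (each contributing a factor $x-1$, since the diagonal entries retain the degrees from $G$) and leaves the path $u_2 u_3 u_4$, whose $3\times 3$ permanent I would compute directly; this gives $\psi(L_{u_1}(\CT_{4,n})) = (x-1)^n(x^3 - 5x^2 + 10x - 7)$. Deleting $\{u_1,u_2\}$ leaves the $n$ isolated pendants together with the edge $u_3 u_4$, yielding $\psi(L_{u_1u_2}(\CT_{4,n})) = (x-1)^n(x^2 - 3x + 3)$, while deleting $\{u_1, v_1\}$ leaves $n-1$ isolated pendants and the same path $u_2 u_3 u_4$, giving $\psi(L_{u_1v_1}(\CT_{4,n})) = (x-1)^{n-1}(x^3 - 5x^2 + 10x - 7)$. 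The only subtlety here is to keep the diagonal entries equal to the original vertex degrees rather than the degrees in the deleted subgraph.

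Substituting these into the recursion and factoring out $(x-1)^{n-1}$ reduces the claim to a single polynomial identity in $x$ with coefficients in $n$; expanding and collecting the top terms produces the stated coefficients. As an independent consistency check I would recover the first few coefficients from Lemma \ref{firstthreevalues} and Corollary \ref{corolarydetermination}, using the degree sequence (one vertex of degree $n+1$, two of degree $2$, and $n+1$ of degree $1$) together with $t_G = 0$; this reproduces $2m = 2n+6$, the coefficient $\tfrac{3n^2+23n+32}{2}$, and the coefficient $\tfrac{2n^3+24n^2+82n+60}{3}$. The constant term follows by setting $x=0$ in the recursion: each cubic factor contributes $-7$ and each quadratic factor contributes $3$, and combining the three summands gives $(14n+10)(-1)^n$. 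The main obstacle is purely bookkeeping—correctly forming the submatrix permanents and tracking signs through the substitution—rather than any conceptual difficulty.
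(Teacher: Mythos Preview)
Your proposal is correct and follows essentially the same approach as the paper: both use bipartiteness to reduce to the Laplacian case, apply the vertex recursion of Lemma~\ref{reductionlemma}(i) at $u_1$, compute the three principal-submatrix permanents (your expanded cubics and quadratic are identical to the paper's $(x-2),(x-1)$ expressions), and then substitute. Your additional consistency checks via Lemma~\ref{firstthreevalues} and the evaluation at $x=0$ go slightly beyond what the paper records but are sound.
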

\begin{proof}
The equality \( \psi(L(\CT_{4,n}); x) = \psi(Q(\CT_{4,n}); x) \) follows from the bipartiteness of \( \CT_{4,n} \). Letting $u_1$ be the central vertex, applying Lemma \ref{reductionlemma} to $\psi(L(\CT_{4,n});x)$, we obtain: 
\begin{equation}\label{CT_{4,n}}
\psi(L(\CT_{4,n}); x) = (x-n-1)\psi(L_{u_1}(\CT_{4,n}); x)+\psi(L_{u_1u_2}(\CT_{4,n}); x)+n\psi(L_{u_1v_1}(\CT_{4,n}); x)
\end{equation} 
The constituent polynomials are computed as follows:
\begin{align*}
\psi(L_{u_1}(\CT_{4,n}); x) &= (x-2)^2(x-1)^{n+1} + (x-1)^{n+1}+ (x-2) (x-1)^n, \\
\psi(L_{u_1u_2}(\CT_{4,n}); x) &= (x-2)(x-1)^{n+1} +(x-1)^{n}, \\
\psi(L_{u_1v_1}(\CT_{4,n}); x) &= (x-2)^2(x-1)^n+ (x-2)(x-1)^{n-1} + (x-1)^{n}.
\end{align*}
\noindent Substituting these into Equation \ref{CT_{4,n}} and simplifying yields the desired result.
\end{proof}

\begin{thrm}
 $\CT_{4,n}$ is determined by its (signless) Laplacian permanental polynomial.     
\end{thrm}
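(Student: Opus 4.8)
The plan is to mirror the determination argument for $\CT_{3,n}$, but with one essential new ingredient, since here the degree sequence alone will \emph{not} suffice.

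First I would read off the graph invariants. Let $H$ be any graph with $\psi(L(H);x)=\psi(L(\CT_{4,n});x)$ (the signless case is treated in parallel). By Lemma \ref{firstthreevalues} and Corollary \ref{corolarydetermination} applied to the polynomial computed in the preceding theorem, $H$ has $n+4$ vertices, $n+3$ edges, $\sum_i d_i^2 = n^2+3n+10$, and $-6t_H+\sum_i d_i^3 = n^3+3n^2+4n+18$. Writing $k_i$ for the number of vertices of degree $i$, these read
\begin{align*}
\sum_i k_i &= n+4, & \sum_i i\,k_i &= 2n+6,\\
\sum_i i^2 k_i &= n^2+3n+10, & -6t_H+\sum_i i^3 k_i &= n^3+3n^2+4n+18.
\end{align*}

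Next I would pin down the degree sequence. A vertex of degree $\ge n+2$ is excluded by the third identity, since it already contributes $(n+2)^2=n^2+4n+4$ to $\sum_i i^2 k_i$, overshooting $n^2+3n+10$ once $n\ge 7$ (only finitely many small $n$ need a direct check against the edge count). With all degrees in $\{0,\dots,n+1\}$ I form the cubic combination
\[
\sum_i (i-1)(i-2)(i-n-1)\,k_i
= \sum_i i^3 k_i-(n+4)\sum_i i^2 k_i+(3n+5)\sum_i i\,k_i-2(n+1)\sum_i k_i,
\]
and substituting the four identities collapses the right-hand side to $6t_H$, giving
\[
6t_H-\sum_i (i-1)(i-2)(i-n-1)\,k_i=0 .
\]
For $0\le i\le n+1$ each summand $-(i-1)(i-2)(i-n-1)k_i$ is non-negative, so every term must vanish: $t_H=0$, $k_0=0$, and $k_i=0$ for $3\le i\le n$. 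The three linear identities then force $k_{n+1}=1$, $k_2=2$, $k_1=n+1$. Since $t_H=0$ there are no triangles, and since only three vertices have degree $\ge 2$ no longer cycle can exist either; hence $H$ is a forest, and with $n+4$ vertices and $n+3$ edges it is a tree.

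The hard part is the final step, because, unlike $\CT_{3,n}$, this degree sequence is realised by exactly two non-isomorphic trees: $\CT_{4,n}$ itself (the two degree-$2$ vertices adjacent, forming the $P_4$ tail) and the spider $S$ obtained by attaching two legs of length $2$ and $n-1$ legs of length $1$ to the central vertex. To separate them I would use the constant term, which for a tree $T$ equals $(-1)^{|V(T)|}\,\mathrm{per}(L(T))$, together with the matching expansion $\mathrm{per}(L(T))=\sum_{M}\prod_{v\notin V(M)}d(v)$, the sum running over all matchings $M$ of $T$ (valid for a tree because $L=D-A$ has no permutation contributions from cycles of length $\ge 3$, so only fixed points and edge-transpositions survive). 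A short computation gives $\mathrm{per}(L(\CT_{4,n}))=14n+10$ (which agrees with the stated constant term, a useful consistency check) and $\mathrm{per}(L(S))=18n+6$; these disagree for every $n\ge 2$. Thus the competing spider is eliminated and $H\cong\CT_{4,n}$, so $\CT_{4,n}$ is determined by $\psi(L)$. For the signless polynomial the same four identities hold, but the sign change in Lemma \ref{reductionlemma} turns the cubic identity into $6t_H=\sum_i -(i-1)(i-2)(i-n-1)k_i$, whose summands are non-negative and therefore no longer force $t_H=0$; here I would again invoke the constant term (now governed by $\mathrm{per}(Q(H))$) to rule out triangle-bearing competitors before repeating the tree analysis, and I expect this exclusion to be the most delicate point of the signless case.
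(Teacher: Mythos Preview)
Your argument follows essentially the same route as the paper: the cubic combination $6t_H-\sum_i(i-1)(i-2)(i-n-1)k_i=0$ forces $t_H=0$ and pins down the degree sequence, two candidate trees remain, and they are separated by the constant term. Two small differences are worth recording. First, you distinguish the competitor via the matching expansion $\mathrm{per}(L(T))=\sum_M\prod_{v\notin V(M)}d(v)$, whereas the paper recomputes $\psi(L(H_{3,n});x)$ from Lemma~\ref{reductionlemma}; your value $\mathrm{per}(L(S))=18n+6$ is in fact the correct one (the paper's displayed constant $(18n-15)(-1)^n$ is an arithmetic slip, as is its stated $k_1=n+2,\;k_2=1$ in place of the $k_1=n+1,\;k_2=2$ you obtain). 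Second, you are more explicit than the paper in arguing that $H$ must be a tree, and you correctly flag that the signless Laplacian case does \emph{not} follow from the same non-negativity trick (since the sign of the $6t_H$ term flips). The paper does not address this separately; your plan to exclude triangle-bearing competitors via $\mathrm{per}(Q(H))$ is reasonable but would indeed need to be carried out to make the signless claim complete.
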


\begin{proof}
Similar to Theorem \ref{CT{3,n}} we now have 
\begin{align*}
    \sum_{i=0}^{d_{\max}} k_i = n+4, \sum_{i=0}^{d_{\max}} i k_i = 2n+6,
    \sum_{i=0}^{d_{\max}} i^2 k_i = n^2 + 3n +10, && 
    -6t_G+ \sum_{i=0}^{d_{\max}} i^3 k_i = n^3 + 3n^2 + 4n + 18. 
\end{align*}
On solving the system of equations, we get, 
\begin{equation}
    6t_G - \sum_{i=0}^{d_{\max}} (i-1)(i-2)(i-n-1)k_i = 0. \label{4eq:key1}
\end{equation}
Since each term on the left side is non-negative in Equation \ref{4eq:key1}, we conclude $t_G = k_3 = k_4 = \cdots = k_n = 0$. This leaves only three non-zero vertex counts: $k_1=n+2, k_2=1$, and $k_{n+1}=1$ implying that graph $G$ has $n+4$ vertices in total. The structure of $G$ consists of a central vertex $v$ of degree $n+1$ (adjacent to all but two vertices), exactly one vertex of degree $2$, and finally $n+1$ leaves (vertices of degree 1). There are two possible non-isomorphic graphs of this degree distribution: one is $\CT_{4,n}$, and the other is $H_{3,n}$ (see Figure \ref{Hgraph}). However, applying Lemma \ref{reductionlemma} to the central vertex $v_1$ shows that the Laplacian permanental polynomial of $H_{3,n}$ differs from that in Theorem \ref{CT{4,n}}. We omit the detailed computation as it follows similar but lengthy steps.
\begin{multline*}
\psi(L(H_{3,n}); x) = \psi(Q(H_{3,n}); x) = x^{n+4} - (2n+6) x^{n+3} + \left( \frac{3n^2 +23n + 32}{2} \right) x^{n+1} \\ - \frac{2n^3 +24n^2 + 82n + 60}{3} x^{n} + \cdots + (18n-15)(-1)^n
\end{multline*}
\end{proof}



\begin{figure}
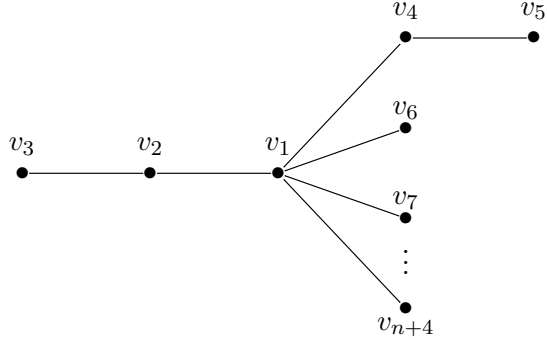

\[ \xygraph{
!{<0cm,0cm>;<1.7 cm,0cm>:<0cm,1.2cm>::}
!{(-0.5,0) }*{\bullet}="2" !{(-0.5,0.3) }*{v_{3}}
 !{(0.5,0) }*{\bullet}="3" !{(0.5,0.3) }*{v_{2}}
!{(1.5,0) }*{\bullet}="4"  !{(1.5,0.3) }*{v_{1}}
!{(2.5,1.5)}*{\bullet}="4a" !{(2.5,1.8) }*{v_{4}}
!{(3.5,1.5)}*{\bullet}="5a" !{(3.5,1.8) }*{v_{5}}
!{(2.5,.5)}*{\bullet}="4b"  !{(2.5,0.7) }*{v_{6}}
!{(2.5,-.5)}*{\bullet}="4c" !{(2.5,-0.3) }*{v_{7}} !{(2.5,-0.9)}*{\vdots}="vdots"
!{(2.5,-1.5)}*{\bullet}="4d" !{(2.5,-1.7) }*{v_{n+4}}
"2"-"3" "3"-"4"
"4"-"4a" "4"-"4b" "4"-"4c" "4"-"4d" "5a"-"4a" 
}
\]
\caption{$H_{3,n}$ graph}
\label{Hgraph}
\end{figure}

\subsection{Regular Spider Tree}\label{spider}
A spider graph is a tree that consists of a central vertex and several paths radiating outward from the central vertex, with no other branching. Note that in this graph, one vertex has degree at least $3$ and all others have degree at most $2$. We consider a special type of spider graph $\mathcal S_{n,m}$, where there are a total of $n$ paths, each of length $m$, connected to the central vertex. We consider the central vertex $v_1$, and label the vertices of $i^{\rm th}$ path as $\{v_i,v_{i+1},v_{i+2},\cdots, v_{i+(m-1)}\}$ for $i\in\{2,2+m,2+2m,\cdots,2+(n-1)m\}$. 

\begin{thrm}\label{spidertheorem}
Let $\mathcal S_{n,m}$ denote the spider graph with $n$ legs of length $m$. Set $A:=x-2$, and $B:=x-1$, then 
\begin{align}\label{spiderequation}
    \psi(L(\mathcal S_{n,m}); x) = \psi(Q(\mathcal S_{n,m}); x) = (x-n)C_m^n + nC_{m-1}C_m^{n-1}
\end{align}
where $\{C_k\}$ is linear non-homogeneous, recurrence relation of degree $2$, defined as
\[
C_1 = B, \quad C_2 = AB + 1, \quad C_k = A C_{k-1} + C_{k-2} \quad \text{for } k \geq 3.
\]
\end{thrm}

\begin{proof}
First observe that as $\mathcal S_{n,m}$ is a bipartite graph, $\psi(L(\mathcal S_{n,m}); x) = \psi(Q(\mathcal S_{n,m}); x)$. We establish Equation \ref{spiderequation}, by mathematical induction on $m$. First, consider the base case $m=2$. Then by considering the central vertex $v_1$, we get $\psi(L_{v_1}(\mathcal S_{n,m}); x) =((x-2)(x-1)+1)^n$, and $\psi(L_{v_1v_2}(\mathcal S_{n,m}); x) =(x-1)((x-2)(x-1)+1)^{n-1}$. Thus, using the Lemma \ref{reductionlemma} for Laplacian permanental polynomials, we obtain: $\psi(L(\mathcal S_{n,2}); x) =(x-n)C_2^n + nC_{1}C_2^{n-1}$, which establishes the base case. 

Now we consider the assumption is true for $m=k$, i.e., $\psi(L(\mathcal S_{n,k}); x) = \psi(Q(\mathcal S); x) = (x-n)C_k^n + nC_{k-1}C_k^{n-1}$. We aim to show it holds for $m = k + 1$. In the spider graph $S_{n,k+1}$, the neighbors of the central vertex $v_1$ are $\{v_{2 + i(k+1)} : 0 \leq i < n\}$.
Applying Lemma \ref{reductionlemma}, we have the Laplacian permanently polynomial of $S_{n,k+1}$ as 
$$\psi(L(\mathcal S_{n,k+1}); x) =(x-n)\psi(B_{k+1};x)^{n}+n \psi(B_{k+1};x)^{n-1} \psi(B_{k};x)$$

\noindent where $B_{k+1}$ is the tridiagonal matrix 
\[
B_{k+1} = 
\begin{bmatrix}
2 & -1 & 0 & \cdots & 0 & 0 \\
-1 & 2 & -1 & \cdots & 0 & 0 \\
0 & -1 & 2 & \cdots & 0 & 0 \\
\vdots & \vdots & \vdots & \ddots & \vdots & \vdots \\
0 & 0 & 0 & \cdots & 2 & -1 \\
0 & 0 & 0 & \cdots & -1 & 1
\end{bmatrix}.
\]
\noindent which arises as a submatrix of the Laplacian matrix $L(S_{n,k+1})$ corresponding to one of its pendant paths. It is straightforward to verify that this matrix satisfies the recurrence:
\[
\psi(B_{k+1};x)=(x-2)\psi(B_{k};x)+\psi(B_{k-1};x),
\]
which matches the recurrence for $C_{m+1}$ given in the theorem. Hence, the inductive step is verified. This completes the proof.
\end{proof}

\begin{hey}
The recurrence relation for the sequence $\{C_m\}$, defined as $C_m = A C_{m-1} + C_{m-2}$, closely resembles the Fibonacci recurrence. As a result, the number of distinct terms involved in expanding $C_m$ grows according to the Fibonacci sequence. This reflects the recursive structure of the underlying pendant paths in the spider graph $S_{n,m}$ and aligns with the growth behavior of Laplacian permanental polynomial terms as $m$ increases.
\end{hey}

\begin{thrm}
 $S_{n,2}$ is determined by its (signless) Laplacian permanental polynomial.     
\end{thrm}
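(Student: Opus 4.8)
The plan is to follow the coefficient‑extraction template used in the earlier theorems, but to supplement it with the fully factored polynomial, because---as I explain below---the first four coefficients alone cannot finish the job. First I evaluate Theorem~\ref{spidertheorem} at $m=2$: with $C_1=x-1$ and $C_2=x^2-3x+3$ one obtains
\[
\psi(L(\mathcal{S}_{n,2});x)=\psi(Q(\mathcal{S}_{n,2});x)=(x^2-3x+3)^{\,n-1}\bigl(x^3-(n+3)x^2+(4n+3)x-4n\bigr).
\]
By Lemma~\ref{firstthreevalues} and Corollary~\ref{corolarydetermination}, any graph $G$ sharing this Laplacian permanental polynomial satisfies $|V(G)|=2n+1$, $|E(G)|=2n$, $\sum_i d_i^2=n^2+5n$, and $-6t_G+\sum_i d_i^3=n^3+9n$ (the signless polynomial yields the same first three invariants and $6t_G+\sum_i d_i^3=n^3+9n$ instead).

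Next I bound the maximum degree of $G$. If a vertex had degree $D\ge n+1$, then $D^2\le\sum_i d_i^2=n^2+5n$ forces $D\le n+2$; deleting that vertex leaves $2n$ vertices with degree‑sum $4n-D$ and square‑sum $n^2+5n-D^2$. For $D=n+1$ both quantities equal $3n-1$, so $\sum d_i^2=\sum d_i$ forces every remaining degree into $\{0,1\}$, giving at most $2n$ unit degrees and hence $\sum d_i\le 2n<3n-1$, a contradiction for $n\ge 2$; for $D=n+2$ the square‑sum $n-4$ is smaller than the degree‑sum $3n-2$, contradicting $d_i^2\ge d_i$. Thus $d_{\max}\le n$. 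With this in hand, the four moment values combine into the identity $\sum_i (i-1)(i-2)(i-n)k_i=6t_G$, whose summands are all $\le 0$ for $0\le i\le n$ while the right‑hand side is $\ge 0$; hence $t_G=0$ and $k_i=0$ for $i\notin\{1,2,n\}$. Solving $k_1+k_2+k_n=2n+1$, $k_1+2k_2+nk_n=4n$, $k_1+4k_2+n^2k_n=n^2+5n$ gives $k_1=k_2=n$ and $k_n=1$: exactly the degree sequence of $\mathcal{S}_{n,2}$.

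Using the connectivity information encoded in the constant term (Liu~\cite{liu2019signless}), $G$ is connected; being a tree on $2n+1$ vertices with a single vertex of degree $\ge 3$, it is a spider $\mathrm{Sp}(a_1,\dots,a_n)$ with $\sum_j a_j=2n$. The essential difficulty now appears: the degree sequence does \emph{not} determine the spider---for instance the broom $\mathrm{Sp}(1,\dots,1,n+1)$ has the identical degree sequence---so the full polynomial must be used. Applying Lemma~\ref{reductionlemma} at the centre generalises Theorem~\ref{spidertheorem} to
\[
\psi(L(\mathrm{Sp}(a_1,\dots,a_n));x)=(x-n)\prod_{j}C_{a_j}+\sum_{j}C_{a_j-1}\prod_{l\ne j}C_{a_l}.
\]
Let $\rho$ be a non‑real root of $C_2$. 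The pivotal lemma is that $C_k(\rho)=0$ precisely when $k=2$: since $\rho-2$ is a primitive cube root of unity, the two characteristic roots $t_\pm$ of the recurrence at $x=\rho$ have different moduli, so writing $C_k(\rho)=\alpha t_+^k+\beta t_-^k$ the modulus equation $|t_+/t_-|^k=|\beta/\alpha|$ has a unique real solution $k$, and $k=2$ is a solution. Substituting into the product formula, the multiplicity of $\rho$ as a root of $\psi(L(\mathrm{Sp}))$ equals $\#\{j:a_j=2\}-1$ (read as $0$ when no leg has length two): the $s=\#\{j:a_j=2\}$ lowest‑order summands all equal the common term $C_1C_2^{\,s-1}\prod_{a_l\ne 2}C_{a_l}$, which is nonzero at $\rho$ and does not cancel, while every other summand and the term $(x-n)\prod_j C_{a_j}$ vanish to higher order. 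Matching the value $n-1$ therefore forces $s=n$, i.e. every leg has length two and $G\cong\mathcal{S}_{n,2}$.

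The step I expect to be the main obstacle is this last one: recognising that the habitual degree‑sequence finish fails here and replacing it with the root‑multiplicity computation. Its two technical ingredients---the claim $C_k(\rho)=0\iff k=2$ (which rests on $\alpha,\beta\ne 0$ and the moduli being unequal) and the non‑cancellation of the lowest‑order terms---are where the real work lies. A further subtlety concerns the signless case: although bipartiteness gives $\psi(L(\mathcal{S}_{n,2}))=\psi(Q(\mathcal{S}_{n,2}))$, in the $Q$‑setting the moment identity reads $\sum_i(i-1)(i-2)(i-n)k_i=-6t_G$, so the sign no longer squeezes $t_G$ to $0$; excluding triangles before the spider reduction applies must instead be drawn from the finer structure of the already‑known polynomial, and this is the most delicate point of the argument.
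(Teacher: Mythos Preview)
Your argument is substantially more complete than the paper's, and in fact repairs a genuine gap there. After obtaining the degree multiset $k_1=k_2=n$, $k_n=1$, $t_G=0$, the paper simply asserts that ``there are two possible simple non-isomorphic graphs with this degree sequence'' ($\mathcal{S}_{n,2}$ and $C_n\cup K_{1,n}$) and rules out the disconnected one via the constant term. That assertion is false: every spider $\mathrm{Sp}(a_1,\dots,a_n)$ with $\sum_j a_j=2n$ realises the same degree sequence, so already for $n=3$ the candidates include $\mathrm{Sp}(1,2,3)$ and $\mathrm{Sp}(1,1,4)$. You identify this correctly and supply a real finish. Your bound $d_{\max}\le n$ (which the paper omits) is needed for the sign analysis, and your root-multiplicity argument is the essential new idea: writing $\psi(L(\mathrm{Sp}))=C_2^{\,s-1}\bigl[\,sC_1\prod_{a_l\ne 2}C_{a_l}+C_2(\cdots)\bigr]$ for $s\ge1$ and invoking the lemma $C_k(\rho)=0\iff k=2$ gives multiplicity exactly $s-1$ at $\rho$, forcing $s=n$.

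Two places still need tightening. First, the case $s=0$ is not covered by your factorisation; you assert multiplicity $0$ there, but what you actually need is $\psi(\rho)\ne0$, which amounts to showing $(\rho-n)+\sum_j C_{a_j-1}(\rho)/C_{a_j}(\rho)\ne0$ under the constraint $\sum a_j=2n$, $a_j\ne2$. This is a short additional computation, not automatic. Second, your caveat about the signless Laplacian is exactly right and is \emph{not} addressed by the paper either: with $+6t_G$ in the third-moment identity the inequality no longer pins $t_G$ to zero, so one cannot pass directly to ``$G$ is a tree''. A clean way out would be to argue that if $t_G\ge1$ then, with $|E|=|V|-1=2n$, the triangle forces the degree-two vertices into a configuration incompatible with the already-determined $\sum d_i^2$; alternatively, a further coefficient of the known polynomial can be invoked. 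As written, both your proposal and the paper leave the $Q$-case incomplete.
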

\begin{proof}
Let $G$ be a graph having the same Laplacian permanental polynomial as $S_{n,2}$. Then, as derived from Theorem \ref{spidertheorem}, the polynomial admits the expansion:
\begin{align*}
     \psi(L(G); x) & =  (x-n)((x-2)(x-1)+1)^n+n(x-1)((x-2)(x-1)+1)^{n-1}\\
     & = x^{2n+1}-4nx^{2n}+\frac{15n^2-n}{2}x^{2n-1}-n(3n-2)(3n+1)x^{2n-2}+\cdots+(-3)^{n-1}(3n-1)
\end{align*}
From this Laplacian polynomial, using the Lemma \ref{firstthreevalues}, we have 
\begin{align}
    \sum_{i=0}^4 k_i = 2n+1, \quad
    \sum_{i=0}^4 i k_i = 4n, \quad
    \sum_{i=0}^4 i^2 k_i = n^2+5n, \quad
     -6t_G + \sum_{i=0}^4 i^3 k_i = n^3+9n. 
\end{align}
Consequently, we get
\begin{equation}
    6t_G - \sum_{i=0}^{d_{\max}} (i-1)(i-2)(i-n)k_i = 0. \label{4eq:key}
\end{equation}

Solving this system uniquely determines the non-negative integer solution: $t_G = 0, k_0=k_3=\cdots=k_{n-1}=0,k_1=k_2=n$, and $k_n=1$. This degree sequence corresponds to a graph with $n$ vertices of degree $1$, $n$ vertices of degree $2$, and exactly one vertex of degree $n$. There are two possible simple non-isomorphic graphs with this degree sequence. One of them is a disconnected graph with one cycle $C_n$, and a star graph $S_n$. However, the first case is impossible because the Laplacian polynomial's constant term would vanish (disconnected components), contradicting the given form. Thus, the only admissible graph is $S_{n,2}$, which is uniquely determined.
\end{proof}

\begin{hey}
The methods developed in this paper suffice to characterize $S_{n,2}$ via its Laplacian permanental polynomial. However, extending these results for $S_{n,m}$, $m\ge 3$, presents significant challenges, as the polynomial’s expansion involves higher-order terms with non-linear dependencies on $n$. Hence, it remains an open problem, and its resolution may require tools beyond those employed here.
\end{hey}

\section{Perfect Binary Tree} 

In this section, we consider the perfect binary tree $T_\ell$, which is a binary tree of depth $\ell$ having $2^i$ vertices at level $i$ for $0\le i\le \ell$. Note that for a given $\ell$, the number of vertices in $T_\ell$ is $\sum_{i=0}^\ell 2^i=2^{\ell+1}-1$. 

\begin{thrm}\label{perfectbinarythrm}
Let $T_{\ell}$ denote the perfect (complete and full) binary tree of height $\ell$. Then $$
\psi(L(T_{\ell}); x) = \psi(Q(T_\ell); x) = (x-2)A_\ell^2 + 2 A_{\ell} A_{\ell-1}^{2},
$$ where the sequence $\{A_\ell\}$ is defined recursively by
\[
A_0 = 1, \quad A_1 = x-1, \quad A_i = (x-3) A_{i-1}^2 + 2 A_{i-1}  A_{i-2}^2 \quad \text{for } m \geq 2.
\]
\end{thrm}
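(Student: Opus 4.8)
The plan is to prove the formula by applying the vertex-reduction formula of Lemma~\ref{reductionlemma}(i) at the root of $T_\ell$ and then recognizing the submatrix polynomials as powers of the auxiliary sequence $\{A_i\}$. Since $T_\ell$ is bipartite, the equality $\psi(L(T_\ell);x)=\psi(Q(T_\ell);x)$ is automatic from the bipartite theorem, so I need only establish the formula for the Laplacian. Let $r$ denote the root. The root has degree $2$ and lies on no cycle (a tree has no cycles), so Lemma~\ref{reductionlemma}(i) collapses to
\[
\psi(L(T_\ell);x) = (x-2)\,\psi(L_r(T_\ell);x) + \sum_{u \in N(r)} \psi(L_{ru}(T_\ell);x).
\]
Deleting $r$ disconnects $T_\ell$ into two copies of the depth-$(\ell-1)$ perfect binary tree, each rooted at a child of $r$, but with the crucial difference that the root of each such copy now has degree $2$ in the original graph rather than degree~$1$: the $L_r$ submatrix records the original degrees. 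The intended meaning of $A_i$ is precisely the permanental polynomial of the principal submatrix associated with a perfect binary tree of height $i$ whose own root still carries its degree from the ambient tree (degree $3$ for an internal subtree root, degree $1$ adjusted appropriately at the leaves); I would state this interpretation as the central lemma driving the induction.

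The key steps, in order, are as follows. First I would define $A_i$ structurally as the permanent $\mathrm{per}(xI - M_i)$ where $M_i$ is the principal submatrix of the full Laplacian indexed by the vertices of a height-$i$ rooted subtree, whose root has diagonal entry $3$ (its degree in the enveloping tree). Second, I would derive the recurrence for $A_i$ by applying Lemma~\ref{reductionlemma}(i) to the root of that subtree: this root has degree $3$ internally, contributing the $(x-3)$ factor, it has two children each heading a height-$(i-1)$ subtree giving the $A_{i-1}^2$ term, and deleting the root together with one child yields the $A_{i-1}A_{i-2}^2$ contributions, summed over the two children to produce the factor $2$. This reproduces exactly $A_i = (x-3)A_{i-1}^2 + 2A_{i-1}A_{i-2}^2$, and I would check the base cases $A_0 = 1$ (single leaf, empty submatrix convention giving the constant polynomial, or $x$ minus its degree appropriately) and $A_1 = x-1$ directly. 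Third, returning to the root $r$ of the whole tree with its genuine degree $2$, I identify $\psi(L_r(T_\ell);x)$ as a product over the two root-subtrees, each of height $\ell-1$ with an internal-style root, giving $A_{\ell-1}^2$ is not quite right—rather the two branches below $r$ are height-$(\ell-1)$ subtrees whose roots have degree $2$ after deletion of $r$; I must track this degree bookkeeping carefully so that $\psi(L_r(T_\ell);x)=A_\ell^2$ and $\psi(L_{ru}(T_\ell);x)=A_\ell A_{\ell-1}^2$ emerge, matching the claimed $(x-2)A_\ell^2 + 2A_\ell A_{\ell-1}^2$.

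The main obstacle, and the step demanding the most care, is the degree-bookkeeping in the principal submatrices: the diagonal entry of a vertex in any $L_S$ submatrix is its degree in the \emph{original} graph $G$, not in the subgraph induced after deletions, so a leaf of $T_\ell$ and the root of an isolated branch contribute different diagonal values even though both head subtrees of the same shape. I would resolve this by fixing, once and for all, the convention that $A_i$ encodes a height-$i$ subtree whose root sits at diagonal value matching an internal node of the big tree, and then verifying that every invocation of the reduction formula peels off vertices consistently with this convention. A secondary, purely technical point is confirming that the cycle-sum terms in Lemma~\ref{reductionlemma}(i) vanish throughout, which is immediate since every graph and every induced submatrix here is acyclic. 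Once the convention is pinned down, the induction on $\ell$ closes routinely: assuming the subtree polynomials equal the appropriate powers of $A_{\ell-1}$ and $A_{\ell-2}$, the reduction at $r$ assembles them into the stated closed form.
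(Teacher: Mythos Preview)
Your plan is correct and matches the paper's proof: both expand at the root via Lemma~\ref{reductionlemma}(i), recognize that $L_r(T_\ell)$ is block-diagonal with two identical blocks whose permanental polynomial is $A_\ell$, and obtain the recurrence for $A_i$ by applying the same reduction to the root of a block (which carries diagonal entry $3$). One bookkeeping point to straighten out before you write it up: with the theorem's normalization $A_0=1$ and $A_1=x-1$, the block associated to $A_i$ has $2^i-1$ vertices (i.e., it is a subtree of height $i-1$, not height $i$), so your structural definition is off by one index shift; once you realign this, your intended identifications $\psi(L_r(T_\ell);x)=A_\ell^{\,2}$ and $\psi(L_{ru}(T_\ell);x)=A_\ell A_{\ell-1}^{\,2}$ follow immediately and your momentary hesitation between $A_{\ell-1}^2$ and $A_\ell^2$ disappears.
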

\begin{proof}
Since $T_{\ell}$ is bipartite, $\psi(L(T_{\ell});x)=\psi(Q(T_{\ell});x)$. We proceed with mathematical induction on $\ell$. First, we label the vertices as follows: The left branch receives $\{v_1,v_2,\dots,v_{\ell+1}\}$. The other pendant vertex connected to $v_\ell$ is $v_{\ell+2}$, and the second child of $v_{i}$ receives label $v_{2^{\ell-i}+i}$ for $0\le i\le \ell$. Subsequent labels follow this pattern. Applying Lemma \ref{reductionlemma} to $\psi(L(T_{\ell});x)$, with $v_1$ as the central vertex, we obtain: 
\begin{align}
\label{T_{ell}}
\psi(L(T_{\ell}); x) = (x-3)\psi(L_{v_1}(T_{\ell}); x)+2\psi(L_{v_1v_2}(T_{\ell}); x)
\end{align}
For the general case, the reduced matrix $L_{v_1}(T_{\ell})$ has the block diagonal form,
\[
\begin{bmatrix} 
B_\ell & 0 \\ 
0 & B_\ell 
\end{bmatrix}
\] 
\noindent where $B_\ell$ is the  square submatrix of dimension $2^{\ell}-1$ with diagonal entries consist of the values $ x-3$ and $ x-1$, appearing $2^{\ell-1}-1$ and $2^{\ell-1}$ times respectively, while the off-diagonal entries are either $0$ or $1$ (representing adjacency relations). Now when $\ell=2$, $\psi(L(B_{2}); x) =[(x-3)(x-1)^2+2(x-1)]^2$, which can easily checked as $[(x-3)A_1^2+2A_1A_0^2]$. On the other hand, $\psi(L_{v_1v_2}(T_{2}); x) =[(x-3)(x-1)^2+2(x-1)](x-1)^2 = A_2A_1^2$, which proves the base case.  

Assuming the result holds for $\ell = k$, we now establish it for $\ell = k+1$. So we have 
\begin{align}\label{perfect1}
\psi(L(T_{k}); x)=(x-2)A_k^2+2A_{k}\cdot A_{k-1}^2 
\end{align}
The Laplacian matrix $L_{v_1}(T_{k+1})$ admits the following block decomposition:
\[
\begin{bmatrix}
B_{k+1} & 0 \\
0 & B_{k+1}
\end{bmatrix}
\]
Thus, \begin{align*}
    \psi(L(T_{k+1}); x) & = (x-2)\psi(L_{v_1}(T_{k+1}); x)  + 2\psi(L{v_1v_2}(T_{k+1}); x)\\
    & = (x-2)\big(\psi(L(B_{k+1}); x)\big)^2 + 2\psi(L{v_1v_2}(T_{k+1}); x) \\
\end{align*}
A direct analysis of the matrix structure reveals that $B_{k+1}$ is nearly identical to $T_k$, except for the $(1,1)$-entry, which is $(x-3)$ instead of $(x-2)$. Consequently, the permanental polynomial takes the form (modifying Equation~\eqref{perfect1}):
\[
\psi(L(B_{k+1}); x) = (x-3)A_k^2 + 2A_k \cdot A_{k-1}^2,
\]
Meanwhile, for the edge-deleted submatrix, we have the factorization:
\[
\psi(L_{v_1v_2}(T_{k+1}); x) = \psi(L(B_{k+1}); x) \cdot \psi(L(T_k); x),
\]
This product structure reflects how the tree's symmetry affects its spectrum, which completes the proof. 
\end{proof}

Although the graph $T_2$ (with $7$ vertices) was shown to be determined by its signless Laplacian permanental polynomial computationally \cite{liu2019signless}, we present an independent theoretical proof of this result for both the Laplacian and signless Laplacian cases.

\begin{thrm}
 $T_2$ is determined by its (signless) Laplacian permanental polynomial.     
\end{thrm}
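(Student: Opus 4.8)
The plan is to mirror the degree-sequence reconstruction used for $\CT_{3,n}$ and $\CT_{4,n}$, and then to separate $T_2$ from its one genuine competitor by a single higher coefficient. Suppose $G$ satisfies $\psi(L(G);x)=\psi(L(T_2);x)$ (the signless case is identical, since $T_2$ is bipartite). Specializing $\ell=2$ in Theorem~\ref{perfectbinarythrm}, or reading off the degree data $(3,3,2,1,1,1,1)$ directly, Lemma~\ref{firstthreevalues} and Corollary~\ref{corolarydetermination} supply the invariants $n=7$, $m=6$, $\sum_i d_i^2=26$, and $-6t_G+\sum_i d_i^3=66$ (with $6t_G+\sum_i d_i^3=66$ on the signless side). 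In the $k_i$ notation these become
\[
\sum_i k_i=7,\qquad \sum_i i\,k_i=12,\qquad \sum_i i^2 k_i=26,\qquad -6t_G+\sum_i i^3 k_i=66 .
\]

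First I would bound the maximum degree. Since $2\cdot 4^2=32>26$, at most one vertex can have degree $\ge 4$; a lone vertex of degree $4$ would leave six vertices of total degree $8$ and total squared-degree $26-16=10$, and then $i^2\ge i$ forces exactly one of them to have degree $2$ and the other five degree $\le 1$, whose degrees cannot sum to $6$ — a contradiction. The inequality $d(d-1)\le 14$ (which must hold for any single degree $d$ because the remaining squared-degrees are at least the remaining degrees) excludes degrees $\ge 5$. Hence $d_{\max}\le 3$. Combining the four displayed equations exactly as in the coconut-tree proofs yields the key identity
\[
6t_G-\sum_i (i-1)(i-2)(i-3)\,k_i=0 .
\]
Because $d_{\max}\le 3$, only the $i=0$ summand survives, giving $(i-1)(i-2)(i-3)=-6$, so the identity collapses to $6t_G+6k_0=0$; non-negativity forces $t_G=0$ and $k_0=0$. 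Solving the remaining linear system in $k_1,k_2,k_3$ then pins the degree sequence uniquely to $k_1=4,\ k_2=1,\ k_3=2$, i.e.\ $(3,3,2,1,1,1,1)$.

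The decisive step is that this degree sequence, even with $t_G=0$, does \emph{not} single out $T_2$. A connected graph on $7$ vertices with $6$ edges is a tree, and counting edges among the three non-leaf vertices forces their induced structure to be a path $P_3$; the two resulting trees are $T_2$ (the degree-$2$ vertex is the center of that path, adjacent to both degree-$3$ vertices) and the caterpillar $T'$ (a degree-$3$ vertex is the center, so the two degree-$3$ vertices are adjacent), while a short case check rules out disconnected triangle-free realizations. Since $T_2$ and $T'$ share $n,m,\sum d_i^2$ and $t_G$, all four leading coefficients of Lemma~\ref{firstthreevalues} coincide, so I would descend to the constant term. From Theorem~\ref{perfectbinarythrm} with $\ell=2$ one obtains the factorization $\psi(L(T_2);x)=(x-1)^2(x^2-4x+5)(x^3-6x^2+15x-12)$, whose constant term is $-60$ (equivalently $\mathrm{per}(L(T_2))=60$); computing $\psi(L(T');x)$ via the reduction Lemma~\ref{reductionlemma} at a leaf of $T'$ gives a different constant term, excluding $T'$ and leaving $G\cong T_2$.

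The main obstacle is exactly this last separation: because the two tree candidates agree on every quantity accessible through Lemma~\ref{firstthreevalues}, the argument cannot be closed at the level of the first four coefficients and must rely on a genuine permanent evaluation, for which the constant term is the most economical discriminant. A secondary point demanding care is verifying that the enumeration of simple triangle-free graphs with degree sequence $(3,3,2,1,1,1,1)$ is exhaustive — namely that $T_2$ and $T'$ are the only candidates — so that comparing the finitely many alternatives against the single distinguishing coefficient genuinely completes the proof.
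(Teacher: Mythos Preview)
Your proposal is correct and in fact more careful than the paper's own argument. The paper derives the same invariants and the same degree sequence $(3,3,2,1,1,1,1)$ with $t_G=0$, but then simply asserts that ``the only simple (connected) graph satisfying the given conditions is $T_2$''---a claim that is false, as you rightly observe: the caterpillar $T'$ with backbone of degrees $3,3,2$ (the two degree-$3$ vertices adjacent) is a second triangle-free tree realizing this degree sequence. Your route differs precisely by not overlooking this second candidate: you first bound $d_{\max}\le 3$ so that the key identity $6t_G-\sum_i(i-1)(i-2)(i-3)k_i=0$ collapses cleanly to $t_G=k_0=0$, you argue that any triangle-free realization must be a tree (only three vertices of degree $\ge 2$, so no cycle of length $\ge 4$ either), you enumerate both trees, and you then separate $T_2$ from $T'$ via the constant term of the polynomial (indeed $\psi(L(T');0)=-68\neq -60$, which one obtains from a single application of Lemma~\ref{reductionlemma} at the middle vertex of $T'$). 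The paper's shortcut buys brevity at the cost of a genuine gap; your argument is slightly longer but actually complete.
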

\begin{proof}
Let $G$ be a graph having the same Laplacian permanental polynomial as $T_2$. Then we have the following using Theorem \ref{perfectbinarythrm}: 
\begin{align*}\label{5eq:key}
    \psi(L(G); x) & =  (x-2)((x-3)(x-1)^2+2(x-1))^2+2 ((x-3)(x-1)^2+2(x-1)) (x-1)^2\\
    & = x^7 -12x^6 +65x^5 -200x^4 +371x^3 -408 x^2+243x-60
\end{align*}
Using the Lemma \ref{firstthreevalues}, we have 
\begin{align*}
    \sum_{i=0}^4 k_i = 7, \quad
    \sum_{i=0}^4 i k_i = 6, \quad
    \sum_{i=0}^4 i^2 k_i = 26, \quad
     -6t_G + \sum_{i=0}^4 i^3 k_i = 66. 
\end{align*}

This system of equations admits only one non-negative integer solution: $t_G = 0, k_0=0,k_1=4, k_2=1, k_3=2$. This degree sequence corresponds to a graph with one vertex of degree $2$, two vertices of degree $3$, and finally four vertices of degree $1$. The only simple (connected) graph satisfying the given conditions is $T_2$, which is unique up to isomorphism.
\end{proof}

\begin{thrm}
$T_3$ is determined by its (signless) Laplacian permanental polynomial.     
\end{thrm}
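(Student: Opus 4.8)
The plan is to mirror the strategy used for $T_2$ and the coconut trees: first read off the low-order coefficients of the polynomial, pin down the degree sequence, and then rule out every competing graph. Since $T_3$ is bipartite we have $\psi(L(T_3);x)=\psi(Q(T_3);x)=:P(x)$, so it suffices to work with the single polynomial $P(x)$ and handle the Laplacian and signless Laplacian cases in parallel. Evaluating the recurrence of Theorem~\ref{perfectbinarythrm} at $\ell=3$ presents $P(x)=(x-2)A_3^2+2A_3A_2^2$ as a monic polynomial of degree $15$; from Lemma~\ref{firstthreevalues} and Corollary~\ref{corolarydetermination} I would extract $n=15$, $m=14$, $\sum_i d_i^2=66$, and the cubic invariant $-6t_G+\sum_i d_i^3=178$ in the Laplacian case (respectively $6t_G+\sum_i d_i^3=178$ in the signless case). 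I would also record that the constant term $P(0)$ is nonzero, since this is what forces connectivity later.

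Let $G$ be any graph with the same (signless) Laplacian permanental polynomial as $T_3$. Writing $k_i$ for the number of vertices of degree $i$, the four invariants above read $\sum_i k_i=15$, $\sum_i ik_i=28$, $\sum_i i^2k_i=66$, and $\sum_i i^3k_i=178+6t_G$ (Laplacian) or $178-6t_G$ (signless). Taking the combination $\sum_i(i-1)(i-2)(i-3)k_i$ collapses the first three moments and yields the key identity $6t_G-\sum_i(i-1)(i-2)(i-3)k_i=0$ in the Laplacian case and $6t_G+\sum_i(i-1)(i-2)(i-3)k_i=0$ in the signless case. The cubic $(i-1)(i-2)(i-3)$ vanishes at $i\in\{1,2,3\}$, is positive for $i\ge 4$, and is negative only at $i=0$, so to convert this into a degree sequence I first need $k_0=0$ and $t_G=0$. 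Both follow once $G$ is known to be connected: the nonvanishing constant term rules out disconnected $G$ (via the same constant-term connectivity argument used in the $S_{n,2}$ case), and a connected graph with $m=n-1=14$ is a tree, giving $t_G=0$ and $k_0=0$ simultaneously. With these in hand the key identity forces $k_i=0$ for all $i\ge 4$, and the remaining linear system in $k_1,k_2,k_3$ has the unique solution $k_1=8,\ k_2=1,\ k_3=6$.

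At this point $G$ is a tree on $15$ vertices with eight leaves, one vertex of degree $2$, and six vertices of degree $3$, and the real work begins: unlike the degree sequences met earlier, this one is \emph{not} realized by $T_3$ alone. Deleting the eight leaves leaves a derived tree $T'$ on the seven internal vertices, necessarily a tree of maximum degree at most $3$, and $T_3$ corresponds to the single choice $T'\cong T_2$ with two leaves hung on each of its four leaves; other admissible derived trees (for instance $T'=P_7$ with the leaves distributed so as to keep the prescribed degree sequence) produce non-isomorphic trees with exactly the same $(k_1,k_2,k_3)$. The plan is therefore to enumerate the finitely many pairs (derived tree $T'$, leaf-attachment pattern) compatible with the degree sequence, and to separate $T_3$ from each competitor using one further coefficient of $P(x)$ --- either the constant term $P(0)$, computed through the deletion recurrences of Lemma~\ref{reductionlemma} exactly as in the $\CT_{4,n}$ versus $H_{3,n}$ comparison, or the next coefficient $l_4$, which unlike the arrangement-blind quantity $\sum_i\binom{d_i}{2}$ is sensitive to \emph{how} the degrees are placed (e.g. through a term such as $\sum_{uv\in E}d_ud_v$). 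The main obstacle is precisely this last step: the first four coefficients cannot see the difference between $T_3$ and its degree-sequence twins, so the distinction must come from genuinely finer and computationally heavier invariants, and the argument is only as clean as the enumeration of competitors is short.
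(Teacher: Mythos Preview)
Your approach tracks the paper's exactly through the extraction of $n$, $m$, $\sum d_i^2$, and the cubic invariant, and your linear combination $\sum(i-1)(i-2)(i-3)k_i$ is the natural analogue of the identities used in the coconut-tree proofs. You also fix a typo in the paper: the correct value is $\sum_i ik_i=2m=28$, not $14$.

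The substantive divergence is at the last step, and here you are more careful than the paper. The paper simply asserts that ``the only connected simple graph satisfying these degree conditions is $T_3$, which is unique up to isomorphism,'' and stops. Your observation that this is \emph{false} is correct: the degree sequence $1^8,2,3^6$ is realised by several non-isomorphic trees on $15$ vertices (your caterpillar example with spine $P_7$ already gives one). So the paper's proof, read literally, has a gap at exactly the point you flag.

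Your proposed remedy---enumerate the finitely many trees with this degree sequence and separate them from $T_3$ by a further coefficient such as the constant term or $l_4$---is the right shape of argument, and it is what the paper would have to do to be complete. What keeps your proposal from being a finished proof is that you do not carry out this enumeration or the distinguishing computation; you describe the obstacle honestly but leave it open. In short: same method, but you correctly identify (and the paper overlooks) that the degree sequence alone is insufficient, and neither you nor the paper actually closes that gap.
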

\begin{proof}
Let $G$ be a graph having the same Laplacian permanental polynomial as $T_3$. By Theorem \ref{perfectbinarythrm}, after simplification, we obtain:
\begin{align*}
    \psi(L(G); x) & =  (x-2)[(x-3) ((x-3)(x-1)^2+2(x-1))^2 +2[(x-3)(x-1)^2+2(x-1)](x-1)^2]^2 \\ & +2((x-3) [(x-3)(x-1)^2+2(x-1)] +2[(x-3)(x-1)^2 \\ & +2(x-1)] (x-1)^2)[(x-3)(x-1)^2+2(x-1)]^2\\
    & = x^{15} -28x^{14} +371x^{13} -3074 x^{12} +\cdots-14200
\end{align*}
Applying Lemma \ref{firstthreevalues}, we derive the following system of equations:
\begin{align*}
    \sum_{i=0}^4 k_i = 15, \quad
    \sum_{i=0}^4 i k_i = 14, \quad
    \sum_{i=0}^4 i^2 k_i = 66, \quad
     -6t_G + \sum_{i=0}^4 i^3 k_i = 178. 
\end{align*}

This system of equations admits only one non-negative integer solution: $t_G = 0, k_0=0,k_1=8, k_2=1, k_3=6$. This degree sequence corresponds to a graph with one vertex of degree $2$, six vertices of degree $3$, and finally eight vertices of degree $1$. The only connected simple graph satisfying these degree conditions is $T_3$, which is unique up to isomorphism. 
\end{proof}

\begin{hey}
Similar to the case of the Spider graph, characterizing the perfect binary tree by its (signless) Laplacian permanental polynomial follows an analogous, though considerably more tedious process. This is due to the recursive nature of computing permanental polynomials, which complicates explicit determination. As a result, we propose this as an open problem for further investigation.
\end{hey}

\section{Corona Product of $C_n$ and $K_m$} 
The \textit{corona} $G_1\odot G_2$ of two graphs is the graph obtained by taking one copy of $G_1$, and $p_1$ copies of $G_2$ (where $\vert V(G_1)\vert=p_1$), and then joining the $i^{\rm th}$ vertex of $G_1$ by an edge to every vertex in the $i^{\rm th}$ copy of $G_2$. In this section, first we consider the graph \(G=C_n\odot K_1\) that has \(2n\) vertices and \(2n\) edges. We denote the set of vertices as $ V(G) = \{ v_1, v_2, \dots, v_n, v_{n+1}, v_{n+2}, \dots, v_{2n} \},$ where the first \(n\) vertices belong to the cycle \(C_n\) (called the {inner vertices}), and the remaining \(n\) vertices are the pendant vertices. Here, each inner vertex \(v_i\) (\(1 \leq i \leq n\)) is connected to a pendant vertex \(v_{i+n}\). This corona product \(C_n\odot K_1\) is sometimes also called the \textbf{sunlight graph} in the literature. Recall $T_i(j)$ denotes the generalized (i.e., $i^{\rm {th}}$ order) Triangular number, where $$T_i(j)= \sum_{n=1}^{j} T_{i-1}(n)= \sum_{i_1=1}^i \sum_{i_2=1}^{i_1} \cdots \sum_{i_j=1}^{i_j-1} 1. $$
We observe that $T_i$ corresponds to the well-known Natural numbers, Triangular numbers, Tetrahedral numbers, Pentatope numbers for $i=0,1,2,3$ respectively, and $(i+1)$-simplex numbers for $i\ge 4$.

\begin{thrm}\label{Coronapoly}
For the graph $C_{n}\odot K_1$, 
\begin{align*}
& \psi(L(C_{n}\odot K_1); x)  =  A\psi(L_{v_1}(C_{n}\odot K_1); x)  + 2 \psi(L_{v_1v_2}(C_{n}\odot K_1); x)  + \psi(L_{v_1v_{n+1}}(C_{n}\odot K_1); x)\\ & + 2B^n, and \\
& \psi(Q(C_{n}\odot K_1); x) = A\psi(Q_{v_1}(C_{n}\odot K_1); x)  + 2 \psi(Q_{v_1v_2}(C_{n}\odot K_1); x) + \psi(Q_{v_1v_{n+1}}(C_{n}\odot K_1); x) \\ &+ (-1)^n2B^n,
\end{align*}
where $A = x-3, B=x-1$ and $k=\lfloor \frac{n-3}{2}\rfloor$, and 
\begin{align*}
&\psi(L_{v_1}(C_{n}\odot K_1); x) = \psi(Q_{v_1}(C_{n}\odot K_1); x)=(AB+1)^{n-1}B \\ & +\sum_{j=0}^k T_j(n-2-2j) (AB+1)^{n-3-2j} B^{3+2j} \\
&\psi(L_{v_1v_2}(C_{n}\odot K_1); x)=\psi(Q_{v_1v_2}(C_{n}\odot K_1); x)=B\cdot\psi(L_{v_1}(C_{n-1}\odot K_1); x)\\
&\psi(L_{v_1v_{n+1}}(C_{n}\odot K_1); x)= \psi(Q_{v_1v_{n+1}}(C_{n}\odot K_1); x)=(AB+1)^{n-1} \\&+\sum_{j=0}^k T_j(n-2-2j) (AB+1)^{n-3-2j} B^{2+2j} 
\end{align*}
\end{thrm}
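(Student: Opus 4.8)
The plan is to apply the vertex-reduction formula of Lemma \ref{reductionlemma}(i) at the central cycle vertex $v_1$, and then to evaluate the three resulting principal submatrix polynomials by exploiting the fact that each of them is supported on a forest. First I would record that in $C_n \odot K_1$ every inner vertex has degree $3$ and every pendant vertex has degree $1$, so $d(v_1)=3$ and $x-d(v_1)=A$. The neighbourhood of $v_1$ is $\{v_2,v_n,v_{n+1}\}$, and the cyclic symmetry of the inner cycle gives $\psi(L_{v_1v_2})=\psi(L_{v_1v_n})$, collapsing the neighbour sum to $2\psi(L_{v_1v_2})+\psi(L_{v_1v_{n+1}})$. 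The only cycle through $v_1$ is the inner $n$-cycle; deleting all of its vertices leaves precisely the $n$ pendant vertices as isolated vertices, each contributing its diagonal entry $x-1=B$, so $\psi(L_{V(C_n)})=B^n$. Substituting into Lemma \ref{reductionlemma}(i) yields the stated Laplacian identity with its $+2B^n$ term; the signless version is identical except that the cycle term carries the factor $(-1)^{|V(C_n)|}=(-1)^n$, which produces $(-1)^n 2B^n$.

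The heart of the argument is the evaluation of the three submatrix polynomials, for which I would use the combinatorial expansion of the permanent of a matrix supported on a forest. For $M=xI-L_S$ (or $xI-Q_S$), the only permutations contributing to $\mathrm{per}(M)$ are products of fixed points and transpositions along edges, i.e.\ matchings, with each matched edge contributing $M_{ij}M_{ji}=1$ (this squared off-diagonal entry equals $1$ for both $L$ and $Q$, which is exactly why all three submatrix polynomials coincide for the two matrices). Hence $\mathrm{per}(M)=\sum_{\mathcal M}\prod_{v\text{ unmatched}}(x-d_v)$, summed over matchings $\mathcal M$ of the forest, with $d_v$ the original degree in $G$. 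Deleting $v_1$ (respectively $v_1v_{n+1}$, respectively $v_1v_2$) breaks the cycle and leaves a ``comb'' caterpillar $\Gamma_m$ --- a path of $m$ inner vertices (diagonal $A$) each carrying one pendant (diagonal $B$) --- together with one or two isolated pendants that factor out as powers of $B$. Concretely I would show $\psi(L_{v_1})=B\,\Gamma_{n-1}$, $\psi(L_{v_1v_{n+1}})=\Gamma_{n-1}$, and $\psi(L_{v_1v_2})=B^2\,\Gamma_{n-2}=B\,\psi(L_{v_1}(C_{n-1}\odot K_1))$, the last giving the stated recursive form.

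It then remains to evaluate $\Gamma_m=\sum_{\mathcal M}A^{a(\mathcal M)}B^{b(\mathcal M)}$, where $a,b$ count unmatched inner and pendant vertices. I would organise the matchings by the number $j$ of path edges used among the inner vertices: a choice of $j$ disjoint path edges covers $2j$ inner vertices (forcing their $2j$ pendants to be unmatched, weight $B^{2j}$), can be made in $\binom{m-j}{j}$ ways, and leaves $m-2j$ free cherries, each contributing independently either a matched pendant edge (weight $1$) or two unmatched vertices (weight $AB$), i.e.\ a factor $(AB+1)^{m-2j}$. This gives $\Gamma_m=\sum_{j\ge 0}\binom{m-j}{j}B^{2j}(AB+1)^{m-2j}$. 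Splitting off the $j=0$ term $(AB+1)^m$ and reindexing, the closed form in the theorem follows from the identity $\binom{m-j}{j}=\binom{(m+1-2j)+(j-1)}{j}=T_{j-1}(m+1-2j)$ (using $T_j(i)=\binom{i+j}{j+1}$); setting $m=n-1$ reproduces the stated sums, and the nonvanishing condition $2j\le m$ forces the upper limit $k=\lfloor (n-3)/2\rfloor$.

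The main obstacle I anticipate is the bookkeeping in this last step: correctly translating the matching count into the generalized triangular (simplex) numbers $T_j$ and verifying that the index ranges and exponents line up, in particular confirming that the binomial--simplex identity terminates exactly at $k=\lfloor (n-3)/2\rfloor$. A secondary point requiring care is justifying that the deleted-vertex submatrices are genuinely forests (so the matching expansion applies and $L$ and $Q$ agree on them) and handling small-$n$ boundary cases where the comb degenerates; the full cycle term is, by contrast, the one place where the $L$/$Q$ distinction survives and must be tracked through the $(-1)^n$ factor.
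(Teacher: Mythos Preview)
Your proposal is correct and shares the paper's overall architecture: both apply Lemma~\ref{reductionlemma}(i) at $v_1$, identify the cycle contribution as $B^n$, and then evaluate the three principal-submatrix permanents by recognising that only products of fixed points and transpositions (i.e.\ matchings) contribute, since the supporting graph is a forest. Where you diverge is in the bookkeeping of that matching count. The paper proceeds by induction on $n$, sets up index sets $X$ (inner vertices) and $Y$ (pendants), and tracks ``interchanges within $X$'' versus ``interchanges between $X$ and $Y$'' through a recursive lattice diagram, arriving at the generalized triangular numbers $T_j$ node by node. You instead go straight to the closed form: fix the number $j$ of path edges in the matching, count them by $\binom{m-j}{j}$, observe that each of the $m-2j$ untouched cherries contributes an independent factor $(AB+1)$, and then invoke the identity $\binom{m-j}{j}=T_{j-1}(m+1-2j)$ to recover the stated expression. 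Your route is shorter and avoids the inductive lattice machinery entirely; the paper's route, while more laborious, makes the recursive structure of the $T_j$ visually explicit and generalises more transparently to the $C_m\odot\bar K_n$ computations that follow. The binomial--simplex translation you flag as the main obstacle is exactly the one-line identity above, and your upper limit $k=\lfloor(n-3)/2\rfloor$ is correct once the $j=0$ term is peeled off.
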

\begin{proof}
We present only the proof for the Laplacian case here, noting that the signless Laplacian case follows nearly identical steps. Our argument relies on mathematical induction on $n$. While the cases for even and odd $n$ differ slightly in their details, the core argument remains analogous. Therefore, to avoid repetitive steps, we present the proof only for odd values of $n$. 

For the base case, we consider $n=3$. For the graph $C_3\odot K_1$, using its Laplacian matrix, we immediately obtain this. 
\begin{align*}
    \psi(L_{v_1}(C_{3}\odot K_1); x)  &= [(x-3)^2+1](x-1)^3 + 2(x-3)(x-1)^2+(x-1)\\
     &= [(x-3)(x-1)+1]^2(x-1)+(x-1)^3
\end{align*}
\noindent which can be viewed as $[AB+1]^2B+T_0(1)B^3$. On the other hand, 
\begin{align*}
    \psi(L_{v_1v_2}(C_{3}\odot K_1); x)  &= (x-3)(x-1)^3 + (x-1)^2\\
     &= [(x-3)(x-1)+1](x-1)
\end{align*}
matching the form $(AB+1)B$. Next,  
\begin{align*}
    \psi(L_{v_1v_4}(C_{3}\odot K_1); x)  &= [(x-3)^2+1](x-1)^2 + 2(x-3)(x-1)+ 1\\
     &= [(x-3)(x-1)+1]^2 + (x-1)^2
\end{align*}
which aligns with $(AB+1)^2 + T_0(1)B^2$. Finally, deletion of the cycle containing $v_1$ provides  $\psi(L_{v_1 v_2 v_3}(C_{3}\odot K_1); x) = (x-1)^3$, completing the base case. 

By the induction hypothesis, the claim holds for $n = m$. We extend it to $n = m+1$ as follows. First, we consider the reduced Laplacian matrix $L_{v_1}$ of $C_n\odot K_1$, obtained by deleting the row and column corresponding to a vertex $v_1$. This matrix has dimension $(2n-1)\times (2n-1)$. Its diagonal entries consist of the values $ x-3$ and $ x-1$, appearing $n-1$ and $n$ times respectively, while the off-diagonal entries are either $0$ or $1$. Consequently, the determinant of this matrix will produce terms of the form $(x-3)^a(x-1)^b$ for various combinations of $a$ and $b$. Note that in that matrix $1$ appears only in the entries represented as a pair by $\{a_{i,i+1}, a_{i,i+n}\}_{i=1}^{n-2} \cup \{a_{i,i-1}, a_{i+n,i}\}_{i=2}^{n-1}$. Hence, the existing (non-zero) permutations are only when $\sigma(i)\in \{i\pm 1,i\pm n\}$. To formalize this, let us consider two index sets.
\[
X := \{1, 2, \dots, n-1\}, \qquad Y := \{n+1, n+2, \dots, 2n\}
\]

\begin{itemize}
    \item If $\sigma(i) = i + 1$ for $1 \leq i \leq n-2$, or $\sigma(i) = i -1$ for $2 \leq i \leq n-1$ this implies switches $\{i, i\pm1\}$ within $X$, which reduces the exponent of $(x-3)$ by two.
    
    \item If $\sigma(i) = i + n$ for $1 \leq i \leq n-2$, or $\sigma(i) = i - n$ for $n+1 \leq i \leq 2n$, this implies switches $\{i, i+n\}$ (interchanges between an element of $X$ and an element of $Y$), which reduces the exponents of each $(x-3)$ and $(x-1)$ by one. Note that there is no switch possible for which exponent of $(x-3)$ remains unchanged and the exponent of $(x-1)$ is reduced by $2$.
\end{itemize}


Now, consider interchanges between elements of the set \( X \) (and analogously between \( X \) and \( Y \)). A single interchange within \( X \) can occur in \( n-2 = T_0(n-2) \) distinct ways, while a single interchange between \( X \) and \( Y \) can occur in \( \binom{n-1}{1} \) distinct ways. For two interchanges within \( X \), the number of distinct ways is \( (n-3) + (n-4) + \cdots + 1 = T_1(n-4) \), whereas for two interchanges between \( X \) and \( Y \), it is \( \binom{n-1}{2} \). Following this pattern, any \( l_1 \) interchanges within \( X \) result in \( T_{l_1-1}(n-2l_1) \) distinct ways for \( 1 \leq l_1 \leq (n-1)/2 \), while any \( l_2 \) interchanges between \( X \) and \( Y \) yield \( \binom{n-1}{l_2} \) distinct ways for \( 1 \leq l_2 \leq n-1 \). This generates the leftmost and rightmost branches in the Figure \ref{fig:tree-structure}. 


More generally, at level $l \geq 2$, for the $j^{\text{th}}$ node from the left ($2 \leq j < l$), the number of distinct ways it can occur is given by
\[
T_{p}(q) \cdot \binom{q-1}{j+1},
\]
where $p = l - j - 1$ and $q = n - 2(l-j)$. 

For the subsequent level, there are two possible cases:
\begin{itemize}
    \item If the interchange occurs within $X$, the process moves leftward (see the figure \ref{fig:tree-structure}), and the number of distinct ways becomes
    \[
    T_{p+1}(q-2) \cdot \binom{q-3}{j+1}
    \]
    
    \item If the interchange occurs between $X$ and $Y$, the process moves rightward, and the number of distinct ways becomes
    \[
    T_p(q) \cdot \binom{q}{j+2}
    \]
\end{itemize}
By summing all possible combinations of $(x-3)^a(x-1)^b$ as established in our previous discussion (for odd $n$), and summing branch-wise from right to left in the lattice structure, we obtain the polynomial:
\begin{align*}
    \psi(L_{v_1}(C_{n}\odot K_1); x)  &= \sum_{m=0}^n {n-1\choose m}(x-3)^{n-1}(x-1)^n \\ & + T_0(n-2)\sum_{m=0}^n {n-3\choose m}(x-3)^{n-3}(x-1)^n   \\ & 
    +T_1(n-4)\sum_{m=0}^n {n-5\choose m}(x-3)^{n-5}(x-1)^n  \cdots \\ &
    + T_{(n-3)/2}\big((x-3)^{2}(x-1)^n+(x-3)(x-1)^{n-1}+(x-1)^{n-2}\big)\\ &
    + T_{(n-1)/2}(x-1)^n \\
     &= (AB+1)^{n-1}B+\sum_{j=0}^k T_j(n-2-2j) (AB+1)^{n-3-2j} B^{3+2j} 
\end{align*}
\noindent where $A=x-3$, $B=x-1$, and $k=\lfloor \frac{n-3}{2}\rfloor$. The computation of $\psi(L_{v_1v_2}(C_{n}\odot K_1); x)$ and $\psi(L_{v_1v_{n+1}}(C_{n}\odot K_1); x)$ follows a similar pattern to our previous analysis. 
\begin{itemize}
    \item For $L_{v_1v_2}$, the lattice structure mirrors Figure~\ref{fig:tree-structure} but with the initial node shifted to $(n-2,n)$. 
    \item For $L_{v_1v_{n+1}}$, the structure undergoes a different shift, now beginning at $(n-1,n-1)$.
\end{itemize}
The argument is complete once we account for the different starting nodes, as the computational method remains otherwise unchanged.
\end{proof}

\begin{figure}[h!]
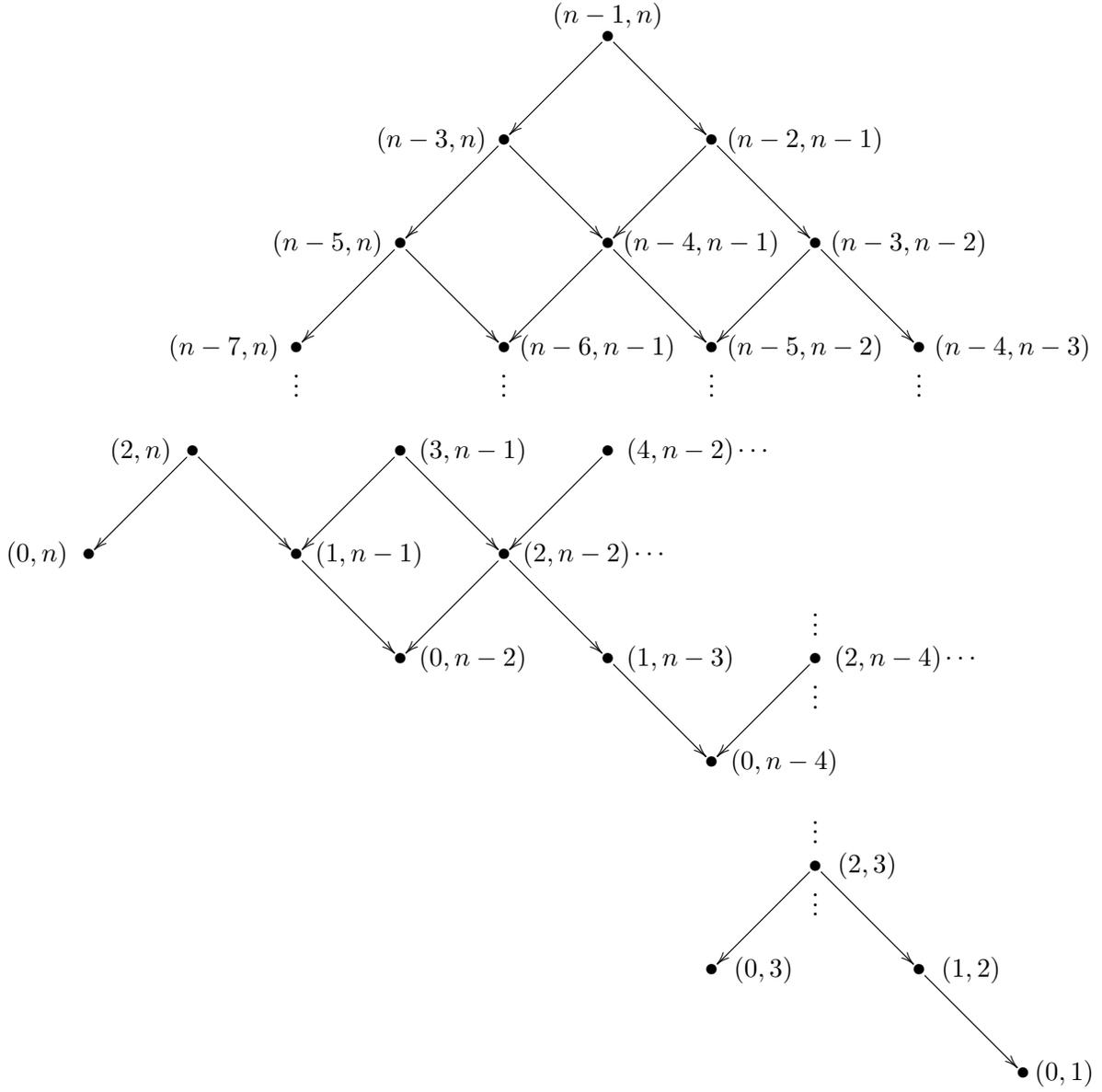

\[
\xygraph{
!{<0cm,0cm>;<1.5 cm,0 cm>:<0 cm,1.5 cm>::}
!{(0,0)}*{\bullet}="A11" !{(0,0.2)}*{(n-1,n)}
!{(-1,-1)}*{\bullet}="A21" !{(-1.7,-1)}*{(n-3,n)}
!{(1,-1)}*{\bullet}="A22" !{(1.9,-1)}*{(n-2,n-1)}
!{(-2,-2)}*{\bullet}="A31" !{(-2.7,-2)}*{(n-5,n)}
!{(0,-2)}*{\bullet}="A32" !{(0.9,-2)}*{(n-4,n-1)}
!{(2,-2)}*{\bullet}="A33" !{(2.9,-2)}*{(n-3,n-2)}
!{(-3,-3)}*{\bullet}="A41" !{(-3.7,-3)}*{(n-7,n)} !{(-3,-3.3)}*{\vdots}="vdots"
!{(-1,-3)}*{\bullet}="A42" !{(-.1,-3)}*{(n-6,n-1)} !{(-1,-3.3)}*{\vdots}="vdots"
!{(1,-3)}*{\bullet}="A43" !{(1.9,-3)}*{(n-5,n-2)}  !{(1,-3.3)}*{\vdots}="vdots"
!{(3,-3)}*{\bullet}="A44" !{(3.9,-3)}*{(n-4,n-3)}  !{(3,-3.3)}*{\vdots}="vdots"
!{(-4,-4)}*{\bullet}="B11" !{(-4.5,-4)}*{(2,n)}
!{(-2,-4)}*{\bullet}="B12" !{(-1.3,-4)}*{(3,n-1)}
!{(0,-4)}*{\bullet}="B13" !{(0.7,-4)}*{(4,n-2)} !{(1.4,-4)}*{\cdots }="dots"
!{(-5,-5)}*{\bullet}="B21" !{(-5.5,-5)}*{(0,n)}
!{(-3,-5)}*{\bullet}="B22" !{(-2.3,-5)}*{(1,n-1)}
!{(-1,-5)}*{\bullet}="B23" !{(-0.3,-5)}*{(2,n-2)} !{(0.4,-5)}*{\cdots }="dots"
!{(-2,-6)}*{\bullet}="B31" !{(-1.3,-6)}*{(0,n-2)}
!{(0,-6)}*{\bullet}="B32" !{(0.7,-6)}*{(1,n-3)}
!{(2,-6)}*{\bullet}="B33" !{(2.7,-6)}*{(2,n-4)} !{(2,-5.6)}*{\vdots}="vdots" 
!{(2,-6.3)}*{\vdots}="vdots" !{(3.4,-6)}*{\cdots }="dots"
!{(1,-7)}*{\bullet}="B41" !{(1.7,-7)}*{(0,n-4)}
!{(2,-8)}*{\bullet}="C11" !{(2.5,-8)}*{(2,3)} !{(2,-8.3)}*{\vdots}="vdots"
!{(2,-7.6)}*{\vdots}="vdots"
!{(1,-9)}*{\bullet}="C21" !{(1.5,-9)}*{(0,3)}
!{(3,-9)}*{\bullet}="C22" !{(3.5,-9)}*{(1,2)}
!{(4,-10)}*{\bullet}="C31" !{(4.4,-10)}*{(0,1)}
"A11": "A21" "A11": "A22"
"A21": "A31" "A21":"A32"  "A22":"A32"  "A22":"A33"  
"A31": "A41" "A31":"A42"  "A32":"A42"  "A32":"A43"   "A33":"A43"  "A33":"A44"
"B11": "B21" "B11":"B22"  "B12":"B22"   "B12":"B23"  "B13":"B23" 
"B22":"B31" "B23":"B31" "B23":"B32"  "B32":"B41"  "B33":"B41" 
"C11": "C21" "C11": "C22" "C22": "C31" 
}
\]
\caption{Recursive Lattice structure showing decomposition from $(n-1,n)$, $n$ odd.}
\label{fig:tree-structure}
\end{figure}


While the characterization of $C_3 \odot K_1$ was previously known through computation \cite{liu2019signless}, we offer an independent theoretical proof.

\begin{thrm}
 $C_3\odot K_1$ is determined by its (signless) Laplacian permanental polynomial.     
\end{thrm}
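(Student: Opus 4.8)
The plan is to adapt the degree-sequence method of the preceding proofs, but to run the Laplacian and signless Laplacian arguments separately, because $C_3\odot K_1$ contains a triangle and is therefore not bipartite, so its two permanental polynomials differ. First I would specialize Theorem~\ref{Coronapoly} to $n=3$ to obtain explicit expressions for $\psi(L(C_3\odot K_1);x)$ and $\psi(Q(C_3\odot K_1);x)$, recording in particular that both have nonzero constant term. Let $G$ be a graph sharing the relevant polynomial; write $k_i$ for the number of degree-$i$ vertices and $t_G$ for the number of triangles. Lemma~\ref{firstthreevalues} and Corollary~\ref{corolarydetermination} then yield the common relations $\sum_i k_i = 6$, $\sum_i i\,k_i = 12$, $\sum_i i^2 k_i = 30$, together with $-6t_G + \sum_i i^3 k_i = 78$ in the Laplacian case and $6t_G + \sum_i i^3 k_i = 90$ in the signless case.

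Solving the three shared relations over the non-negative integers leaves exactly three degree sequences: $(3,3,3,1,1,1)$, $(4,2,2,2,1,1)$, and $(3,3,2,2,2,0)$. The last has an isolated vertex, which contributes a zero row to both $L(G)$ and $Q(G)$; the permanent, and hence the constant term, then vanishes, contradicting the value recorded above, so this sequence is discarded. To separate the remaining two I would use the structural fact that in any realization of $(4,2,2,2,1,1)$ the two degree-one vertices are pendants attached to the four vertices of degree at least two, leaving exactly $6-2 = 4$ edges among those four vertices; since a simple graph on four vertices with four edges has at most one triangle, every such realization satisfies $t_G \le 1$.

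For the Laplacian polynomial, the sequence $(4,2,2,2,1,1)$ would force $t_G = (90-78)/6 = 2$, which the bound just established forbids; hence the degree sequence must be $(3,3,3,1,1,1)$. There the three leaves attach bijectively to the three remaining vertices, which carry the remaining three edges and so induce the unique $3$-vertex, $3$-edge graph $K_3$; the attachment is then forced and $G \cong C_3\odot K_1$. This settles the Laplacian case without further computation.

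The signless case is where the real difficulty lies. Now $(4,2,2,2,1,1)$ forces $t_G = (90-90)/6 = 0$, which is genuinely realizable, so the four coefficients no longer suffice. A short case analysis shows that the unique connected triangle-free graph with this degree sequence is $H$, the four-cycle with two pendant edges attached at a single vertex, which is not isomorphic to $C_3\odot K_1$. I would therefore finish by applying the reduction formula of Lemma~\ref{reductionlemma} to compute $\psi(Q(H);x)$ in full and comparing it with $\psi(Q(C_3\odot K_1);x)$, expecting a discrepancy in a low-order coefficient such as the constant term or the coefficient of $x^2$. Carrying out this permanental computation and verifying the mismatch is the step I expect to be the most laborious; it parallels the elimination of $H_{3,n}$ in the proof of the $\CT_{4,n}$ characterization.
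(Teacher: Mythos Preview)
Your strategy coincides with the paper's: extract the first four coefficients, solve for the possible degree sequences together with $t_G$, discard the option with an isolated vertex via the nonzero constant term, and conclude. In the Laplacian case the paper lists only the two solutions $(k_1,k_3)=(3,3)$ with $t_G=1$ and $(k_0,k_2,k_3)=(1,3,2)$ with $t_G=0$; it silently caps the sums at $i\le 3$ and so never records your third candidate $(4,2,2,2,1,1)$, which you correctly rule out by the observation that such a graph has at most one triangle. Your Laplacian argument is thus the paper's argument with that small gap patched.

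Where your write-up genuinely goes beyond the paper is the signless Laplacian. The paper disposes of it with a single sentence, ``Similar arguments lead us to the same conclusions,'' but as you observe the constraint now reads $6t_G+\sum_i d_i^3=90$, and the sequence $(4,2,2,2,1,1)$ with $t_G=0$ is a legitimate, graphically realizable solution, realized uniquely by the bipartite graph $H$ you describe (a $4$-cycle with two pendants at one vertex). The paper's ``similar arguments'' therefore do not close the case. Your plan to compute $\psi(Q(H);x)$ (equivalently $\psi(L(H);x)$, since $H$ is bipartite) and compare is exactly what is needed; carrying it out gives constant term $84\neq 78$, so the elimination succeeds and your proof is complete where the paper's is not.
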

\begin{proof}
First, we establish the result for the Laplacian permanental polynomial. Let $G$ be a graph having the same Laplacian permanental polynomial as $C_4\odot K_1$. Then by Theorem \ref{Coronapoly} together with Lemma \ref{firstthreevalues} we have the following: 
\begin{align}
    \psi(L(G); x) =  x^6-12x^5+63x^4-176x^3 +273x^2 -215x+74
\end{align}
Using the Lemma \ref{firstthreevalues}, we have 
\begin{align*}
    \sum_{i=0}^3 k_i = 6, \quad
    \sum_{i=0}^3 i k_i = 12, \quad\quad
    \sum_{i=0}^3 i^2 k_i = 30, \quad
     -6t_G + \sum_{i=0}^3 i^3 k_i = 78. 
\end{align*}

This system of equations admits two non-negative integer solutions: $t_G = 1, k_0=k_2=0,k_1=k_3=3$, and $t_G = 0, k_0=1, k_1=0,k_2=3, k_3=2$. However, since the constant term of $\psi(L(G);x)$ is not zero, the graph $G$ cannot have isolated vertices, which eliminates the second solution, which requires $k_0=1$. Thus, the only feasible solution is the first one, where $t_G=1$ and the degree sequence consists of three vertices of degree $1$ and three vertices of degree $3$. The graph $ C_3\odot K_1$ is the unique graph satisfying these conditions, up to isomorphism. 
Next, the signless Laplacian permanental polynomial of $G$ is as follows,  
\begin{align}
    \psi(Q(G); x) =  x^6-12x^5+63x^4-180x^3 +285x^2 -227x+78
\end{align}
Similar arguments lead us to the same conclusions as the Laplacian permanental polynomial. 
\end{proof}

\begin{thrm}
 $C_4\odot K_1$ is determined by its (signless) Laplacian permanental polynomial.     
\end{thrm}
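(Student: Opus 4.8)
The plan is to follow the template of the preceding corona proofs, but with one extra step forced by the larger vertex count. Since $C_4\odot K_1$ is bipartite, Theorem~\ref{Coronapoly} with $n=4$ yields a single explicit polynomial $\psi(L(C_4\odot K_1);x)=\psi(Q(C_4\odot K_1);x)$. I would first read off its leading coefficients and, via Lemma~\ref{firstthreevalues} and Corollary~\ref{corolarydetermination}, record that any graph $H$ sharing this polynomial satisfies $|V(H)|=8$, $|E(H)|=8$, $\sum_i d_i^2=40$, together with $-6t_H+\sum_i d_i^3=112$ in the Laplacian case and $6t_H+\sum_i d_i^3=112$ in the signless case. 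Rewriting everything with the degree-count variables $k_i$, the two clean combinations to extract are $\sum_i(i-2)(i-3)k_i=8$ and $\sum_i(i-1)(i-2)(i-3)k_i=\pm 6t_H$ (with $+$ for $L$ and $-$ for $Q$); the latter expands as $-6k_0+6k_4+24k_5+\cdots$. The nonzero constant term supplied by Theorem~\ref{Coronapoly} forbids a zero row in $L(H)$ or $Q(H)$, so $k_0=0$ (no isolated vertices).

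For the signless Laplacian the argument then closes immediately. With $k_0=0$, the identity $\sum_i(i-1)(i-2)(i-3)k_i=-6t_H$ rearranges to $t_H+k_4+4k_5+\cdots=k_0=0$, whose only non-negative solution forces $t_H=0$ and $k_i=0$ for all $i\ge 4$. Feeding this into $\sum_i(i-2)(i-3)k_i=8$ gives $2k_1=8$, hence $k_1=4$, and the remaining two equations yield $k_2=0$, $k_3=4$. Thus $H$ has degree sequence $(3,3,3,3,1,1,1,1)$ with no triangle.

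The Laplacian case is where the real obstacle lies. Here the same rearrangement reads $t_H+k_0=k_4+4k_5+\cdots$, which with $k_0=0$ only gives $t_H=k_4+\cdots$ and does \emph{not} force $t_H=0$. Solving the system over the non-negative integers now produces, besides the genuine sequence $(3,3,3,3,1,1,1,1)$ with $t_H=0$, a second admissible solution $(4,3,2,2,2,1,1,1)$ with exactly one triangle; this competitor is genuinely realizable (e.g.\ a degree-$4$ hub sitting on a triangle with pendants hung to absorb the remaining degrees), is connected with nonzero constant term, and matches all four leading invariants. Since neither the leading coefficients nor connectivity separate it from $C_4\odot K_1$, I would eliminate it using one further piece of the polynomial: compute the constant term (the Laplacian permanent) explicitly from Theorem~\ref{Coronapoly} and check that no realization of $(4,3,2,2,2,1,1,1)$ attains that value, or, more structurally, compute the coefficient of $x^{n-4}$ and exploit that $C_4\odot K_1$ contains a quadrilateral while every competitor contains only a triangle. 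Carrying this elimination out over all finitely many graphs with the competing degree sequence is the hard part, and it is the step absent from the smaller corona cases.

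Once the degree sequence is pinned to $(3,3,3,3,1,1,1,1)$ with $t_H=0$, uniqueness is clean. In the connected case each of the four leaves must attach to a degree-$3$ vertex, so those four vertices carry $(12-4)/2=4$ mutual edges forming a triangle-free graph on four vertices; by Mantel's theorem the unique triangle-free graph on four vertices with four edges is $C_4$, which forces every inner vertex to bear exactly one pendant and hence $H\cong C_4\odot K_1$. A brief case check on how the leaves distribute among components shows no disconnected realization survives, since every proper split forces some component's degree-$3$ vertices to exceed the triangle-free edge bound $\lfloor a^2/4\rfloor$ on its $a$ inner vertices. This same structural step finishes the signless case, so $C_4\odot K_1$ is determined by each of its two permanental polynomials.
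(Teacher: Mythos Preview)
Your overall framework mirrors the paper's: read off $|V|$, $|E|$, $\sum d_i^2$ and $\mp 6t_H+\sum d_i^3$ from the leading coefficients via Lemma~\ref{firstthreevalues}, solve for the degree multiplicities $k_i$, then argue uniqueness among realizations. For the signless Laplacian your argument is complete and in fact tidier than the paper's: the identity $\sum_i(i-1)(i-2)(i-3)k_i=-6t_H$ together with $k_0=0$ forces $t_H=0$ and $k_i=0$ for $i\ge 4$ in one line, and your Mantel-based uniqueness step for triangle-free realizations of $(3,3,3,3,1,1,1,1)$ (including the disconnected check) is more thorough than the paper's bare assertion that only two such graphs exist.

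For the Laplacian you have spotted a genuine issue that the paper glosses over. The paper asserts the system $\sum k_i=8$, $\sum ik_i=16$, $\sum i^2k_i=40$, $-6t_G+\sum i^3k_i=112$ has the \emph{unique} non-negative solution $k_1=k_3=4$, $t_G=0$; but as you observe, $(k_1,k_2,k_3,k_4)=(3,3,1,1)$ with $t_G=1$ is equally admissible and is realized by connected simple graphs (e.g.\ a triangle $abc$ with two further pendant paths of length two at $a$ and a single pendant at $b$). The paper simply does not see this competitor, so its Laplacian argument is incomplete at exactly the point you flag.

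That said, your proposal does not close the gap either. You outline that one should compare a further invariant---the constant term or the coefficient of $x^{n-4}$---against every realization of $(4,3,2,2,2,1,1,1)$ with one triangle, and you explicitly label this ``the hard part'' without carrying it out. There are several non-isomorphic such graphs (the three pendants can be distributed among the degree-$2$ and degree-$3$ vertices in different ways relative to the triangle), and each must be computed and excluded. Until that finite verification is actually performed, the Laplacian half of the theorem remains unproven in your write-up, just as it does in the paper's.
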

\begin{proof}
Let $G$ be any graph sharing the Laplacian permanental polynomial with $C_4 \odot K_1$  (Theorem \ref{Coronapoly}). From the Lemma \ref{firstthreevalues} we have the following: 
\begin{align}
    \psi(L(G); x) =  x^8-16x^7+116x^6 -488x^5+1288x^4-2144x^3+2190 x^2-1280x+324
\end{align}
Using the Lemma \ref{firstthreevalues}, we have 
\begin{align*}
    \sum_{i=0}^4 k_i = 8, \quad
    \sum_{i=0}^4 i k_i = 16, \quad
    \sum_{i=0}^4 i^2 k_i = 40, \quad
     -6t_G + \sum_{i=0}^4 i^3 k_i = 112. 
\end{align*}

This system of equations admits only one non-negative integer solution: $t_G = 0, k_0=k_2=0,k_1=k_3=4$. This degree sequence corresponds to a graph with four vertices of degree $1$ and four vertices of degree $3$. There are two possible simple graphs with this degree sequence. One of them is illustrated in Figure \ref{H1graph}. But in this graph, $t_G=1$, which makes it invalid due to the contradiction in the number of triangles. Therefore, the only simple graph satisfying the given conditions is $C_4\odot K_1$, which is unique up to isomorphism. The signless Laplacian permanental polynomial is identical to the Laplacian case; hence, an identical argument applies. 
\end{proof}

\begin{figure}[h!]
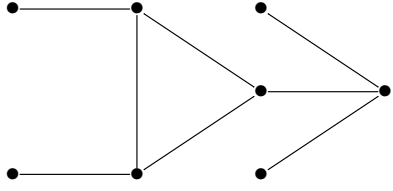

\centering
\[
\xygraph{
  !{<0cm,0cm>;<1.1cm,0cm>:<0cm,1.1cm>::}
  !{(0.5,2)}*{\bullet}="TL"      
  !{(2,2)}*{\bullet}="TR"      
  !{(2,0)}*{\bullet}="BR"      
  !{(0.5,0)}*{\bullet}="BL"      
  !{(3.5,1)}*{\bullet}="A"       
  !{(5,1)}*{\bullet}="M"       
  !{(3.5,2)}*{\bullet}="T"       
  !{(3.5,0)}*{\bullet}="B"       
  "TL"-"TR"-"BR"-"BL"
  "TR"-"A"-"BR"
  "A"-"M"
  "M"-"T"
  "M"-"B"
}
\]
\caption{A possible simple graph with the degree sequence of $3,3,3,3,1,1,1,1$.}
\label{H1graph}
\end{figure}

\begin{thrm}
 $C_5\odot K_1$ is determined by its (signless) Laplacian permanental polynomial.     
\end{thrm}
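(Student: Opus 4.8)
The plan is to reuse the template of the two preceding corona proofs, but with the crucial extra work that for $n=5$ the first four coefficients no longer isolate the graph. First I would instantiate Theorem \ref{Coronapoly} at $n=5$ to write $\psi(L(C_5\odot K_1);x)$ explicitly, and then read its top coefficients through Lemma \ref{firstthreevalues} and Corollary \ref{corolarydetermination}. For any $G$ with $\psi(L(G);x)=\psi(L(C_5\odot K_1);x)$ this gives $|V(G)|=10$, $|E(G)|=10$, $\sum_i d_i^2=50$, and $-6t_G+\sum_i d_i^3=140$. Forming the usual nonnegative combination $6t_G-\sum_i(i-1)(i-2)(i-3)k_i=0$ forces $t_G=0$ and $k_1=k_3=5$ (five leaves, five vertices of degree $3$), exactly the degree data of $C_5\odot K_1$.

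The genuinely new difficulty, absent in the $C_3$ and $C_4$ arguments where the competitor carried a triangle, is that this degree sequence together with $t_G=0$ does not pin down $G$ up to isomorphism. Deleting the five pendant vertices from $G$ leaves a connected, triangle-free, unicyclic graph $H$ on $5$ vertices and $5$ edges, so its unique cycle has length $4$ or $5$. A length-$5$ cycle forces $H=C_5$ and hence $G\cong C_5\odot K_1$; a length-$4$ cycle forces $H$ to be a $C_4$ with one extra pendant vertex, and the leaf counts are then determined: the core vertex of core-degree $3$ receives no leaf, the three core vertices of core-degree $2$ receive one leaf each, and the pendant-of-$C_4$ vertex receives two leaves. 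This produces a second admissible connected graph $G'$, which I would draw in a figure. The nonzero constant term of $\psi(L(C_5\odot K_1);x)$ rules out disconnected realizations by the connectedness criterion used for $C_3\odot K_1$, so the entire burden reduces to eliminating the single competitor $G'$.

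To separate $C_5\odot K_1$ from $G'$ I would appeal to a finer invariant than those furnished by Lemma \ref{firstthreevalues}, namely the number of quadrilaterals: $C_5\odot K_1$ is $C_4$-free, whereas $G'$ contains exactly one $4$-cycle, and this distinction first registers in the coefficient of $x^{n-4}$. Concretely I would compute $\psi(L(G');x)$ by a single vertex reduction via Lemma \ref{reductionlemma}, expanding at the leafless core vertex, and then compare coefficient-by-coefficient against the polynomial of Theorem \ref{Coronapoly}. The two agree through the $x^{n-3}$ term (as they must, sharing all of $n,m,\sum d_i^2,\sum d_i^3,t_G$) but diverge at the $x^{n-4}$ term, so $\psi(L(G');x)\neq\psi(L(C_5\odot K_1);x)$; hence $C_5\odot K_1$ is the only graph realizing the prescribed Laplacian permanental polynomial.

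For the signless Laplacian statement the same reduction applies verbatim. Since $C_5$ is an odd cycle, $C_5\odot K_1$ is non-bipartite, the cycle terms in Lemma \ref{reductionlemma} acquire the factor $(-1)^{|V(C)|}$, and $q_3$ again yields $t_G=0$ with the identical two candidates; meanwhile $G'$ is bipartite (its $C_4$-plus-pendant core is), so $\psi(Q(G');x)=\psi(L(G');x)$, and the quadrilateral count once more forces a discrepancy at the $x^{n-4}$ term, ruling $G'$ out. The main obstacle throughout is precisely this failure of the first four coefficients to be decisive, which is overcome by the one genuinely new calculation relative to the $C_3$ and $C_4$ cases: the explicit $x^{n-4}$ coefficient of the unique triangle-free competitor $G'$.
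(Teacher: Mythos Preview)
Your overall architecture matches the paper's: extract $|V|,|E|,\sum d_i^2$ and $-6t_G+\sum d_i^3$ from the top coefficients, pin down the degree sequence, enumerate candidate graphs, and eliminate the surviving competitor by an explicit polynomial computation. Your competitor $G'$ is precisely the paper's graph $T_2$, and the paper likewise disposes of it by computing $\psi(L(T_2);x)$ in full and observing it differs from $\psi(L(C_5\odot K_1);x)$.

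There is, however, a genuine gap in your degree-sequence step. The identity $6t_G-\sum_i(i-1)(i-2)(i-3)k_i=0$ is correct, but it is \emph{not} a nonnegative combination: the summand $-(i-1)(i-2)(i-3)k_i$ is strictly negative whenever $i\ge 4$, so you cannot conclude term-by-term vanishing. Concretely, the degree sequence $(4,3,3,2,2,2,1,1,1,1)$ with $t_G=1$ satisfies all four moment equations ($\sum k_i=10$, $\sum ik_i=20$, $\sum i^2k_i=50$, $-6t_G+\sum i^3k_i=140$) and is realized by a simple unicyclic graph whose unique cycle is a triangle. The trick works in the earlier $\CT_{m,n}$ proofs only because the cubic there has a root at the actual maximum degree; here $d_{\max}$ is not a priori bounded by $3$. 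The paper itself does not justify uniqueness of the degree sequence either (it simply asserts ``only one non-negative integer solution''), so to make either argument rigorous you must separately rule out sequences with a vertex of degree $\ge 4$, e.g.\ by a short case analysis on $k_4,k_5,\dots$ using the three linear moment equations.

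Once $t_G=0$ and the degree sequence $3^51^5$ are established, your core-graph argument (the degree-$3$ vertices induce a connected triangle-free unicyclic graph on $5$ vertices, hence cycle length $4$ or $5$) is cleaner and more complete than the paper's ad hoc listing of three candidates. One small caution: a nonzero constant term of $\psi(L;x)$ rules out isolated vertices, not disconnectedness in general; you should check directly (it is an easy parity/case check) that no triangle-free simple disconnected graph realizes the sequence $3^51^5$.
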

\begin{proof}
Let $G$ be a graph having the same Laplacian permanental polynomial as $C_5\odot K_1$ (Corollary \ref{Coronapoly}). From the Lemma \ref{firstthreevalues} we have the following: 
\begin{align}\label{4eq:key123}
    \psi(L(G); x) =  x^8-16x^7+116x^6 -488x^5+1288x^4-2144x^3+2190 x^2-1280x+324
\end{align}
Using the Lemma \ref{firstthreevalues}, we have 
\begin{align*}
    \sum_{i=0}^4 k_i = 10, \quad
    \sum_{i=0}^4 i k_i = 20, \quad
    \sum_{i=0}^4 i^2 k_i = 50, \quad
     -6t_G + \sum_{i=0}^4 i^3 k_i = 140. 
\end{align*}

This system of equations admits only one non-negative integer solution: $t_G = 0, k_0=k_2=0,k_1=k_3=5$. This degree sequence corresponds to a graph with five vertices of degree $1$ and five vertices of degree $3$. There are three possible simple graphs with this degree sequence. Two of them are illustrated in Figure \ref{K5}. We can immediately eliminate Graph $T_1$, as it is invalid due to the contradiction in the number of triangles ($t_G=1$). For Graph $T_2$, we calculate the Laplacian permanental polynomial
\( \psi (L(T_2);x)=x^{10}-20x^{9} +185x^{8}-1030x^{7} + 3792x^{6} - 9579x^{5} - 16709x^{4} - 19762x^{3} + 15102x^{2} -6715x + 1317 \), which compared with Equation \ref{4eq:key123}, shows that $\psi (L(C_5\odot K_1);x)\neq \psi (L(T_2);x)$. Therefore, the only simple graph satisfying the given conditions is $C_4\odot K_1$, which is unique up to isomorphism. For the signless Laplacian case, a very similar argument applies. 
\end{proof}

\begin{figure}[h!]
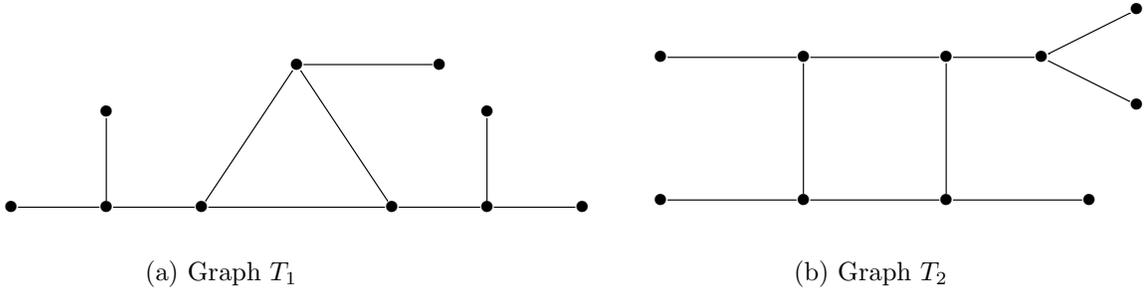

    \centering
     \begin{subfigure}[b]{0.35\textwidth}
         \[ \xygraph{
 !{(-7,-1)}*{\bullet}="1"      
  !{(-6,-1)}*{\bullet}="3"      
  !{(-4,.5)}*{\bullet}="4"      
  !{(-6,0)}*{\bullet}="2"      
  !{(-5,-1)}*{\bullet}="5"       
  !{(-3,-1)}*{\bullet}="6"       
  !{(-2.5,.5)}*{\bullet}="7"       
  !{(-2,-1)}*{\bullet}="8"       
  !{(-2,0)}*{\bullet}="9"       
  !{(-1,-1)}*{\bullet}="10"
  "1"-"3"-"5"-"6"-"8"-"10" "2"-"3" "5"-"4"-"7" "4"-"6" "8"-"9"
} 
\]    
     \subcaption{Graph $T_1$}
     \end{subfigure}
     \hspace{2.6 cm}
     \begin{subfigure}[b]{0.35\textwidth}
        \[ \xygraph{
 !{(0.5,2.5)}*{\bullet}="1a"      
  !{(2,2.5)}*{\bullet}="3a"      
  !{(2,1)}*{\bullet}="4a"      
  !{(0.5,1)}*{\bullet}="2a"      
  !{(3.5,2.5)}*{\bullet}="5a"       
  !{(3.5,1)}*{\bullet}="6a"       
  !{(4.5,2.5)}*{\bullet}="7a"       
  !{(5.5,3)}*{\bullet}="8a"       
  !{(5.5,2)}*{\bullet}="9a"       
  !{(5,1)}*{\bullet}="10a"
  "1a"-"3a"-"5a"-"7a" -"8a"
  "3a"-"4a"-"2a" "4a"-"6a"-"5a" "6a"-"10a" "7a"-"9a"
} 
\]
     \subcaption{Graph $T_2$}
     \end{subfigure}
     \caption{Two possible non-isomorphic simple graphs with the same degree sequence of $3,3,3,3,3,1,1,1,1,1$.}
        \label{K5}
\end{figure}


It can be observed that as $n$ increases, the number of non-isomorphic graphs sharing the same degree sequence grows rapidly. For instance, the graph $C_6\odot K_1$ admits three other non-isomorphic graphs with the same degree sequence. This motivates the following conjecture.

\begin{conj} 
For $n\ge 6$, the $C_n\odot K_1$ is determined by its (signless) Laplacian permanental polynomials.     
\end{conj}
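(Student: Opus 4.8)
The plan is to follow the same two-stage strategy used for $C_3\odot K_1$ through $C_5\odot K_1$ — first constrain the degree sequence and triangle count from the low-order coefficients, then classify all graphs consistent with that data — but to replace the \emph{ad hoc} elimination of competitors by structural invariants that survive for all $n$. First I would apply Lemma \ref{firstthreevalues} and Corollary \ref{corolarydetermination} to any graph $G$ with $\psi(L(G);x)=\psi(L(C_n\odot K_1);x)$ (resp.\ the signless version): this forces $|V(G)|=2n$, $|E(G)|=2n$, $\sum_i d_i^2=10n$, and, in the Laplacian case, $\sum_i d_i^3-6t_G=28n$. Combining these as in the small cases gives the quadratic identity $\sum_i (i-1)(i-3)k_i=0$ and the cubic identity $\sum_i (i-1)(i-2)(i-3)k_i=6t_G$. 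The nonzero constant term of $\psi(L(C_n\odot K_1);x)$ rules out isolated vertices, so $k_0=0$, and the quadratic identity then reads $k_2=\sum_{i\ge 4}(i-1)(i-3)k_i$.

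A key warning is that, unlike the cases $n\le 5$, these four coefficients do \emph{not} pin down the degree sequence for large $n$: for instance $k_1=n-1,\ k_2=3,\ k_3=n-3,\ k_4=1,\ t_G=1$ satisfies all four identities as well. I would therefore compute a bounded number of further coefficients $l_4(G),l_5(G),\dots$ — which is feasible for the target graph through the explicit closed form of Theorem \ref{Coronapoly} and its triangular-number pattern — and use them to eliminate every degree sequence except $k_1=k_3=n$, $t_G=0$. Once the degree sequence is $(1^n,3^n)$ with no triangles and $G$ connected, the structure is essentially forced to be \emph{sunlight-like}: deleting the $n$ pendant vertices leaves a connected graph on the $n$ degree-$3$ vertices with exactly $n$ edges, i.e.\ a unicyclic ``core'' whose unique cycle has length $g\ge 4$, and each core vertex of core-degree $3-\ell$ carries $\ell\in\{0,1,2\}$ pendants.

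The target $C_n\odot K_1$ is precisely the member in which every core vertex has core-degree $2$, equivalently in which every support (quasipendant) vertex carries exactly one pendant, equivalently $g=n$; any other member contains a vertex with two pendant (twin) leaves together with a pendant-free core vertex. I would distinguish $C_n\odot K_1$ from the rest using an invariant that detects this dichotomy: either the number of quasipendant vertices, accessed through a Faria-type formula for the multiplicity of the root $x=1$ of the (signless) Laplacian permanental polynomial \cite{faria1985permanental,faria1995multiplicity}, or the girth $g$ recovered from the coefficient of $\psi$ at the appropriate degree. Either invariant separates $g=n$ (the target) from $g<n$, and $g=n$ with all support vertices distinct forces the core to be the single cycle $C_n$ and hence $G\cong C_n\odot K_1$.

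The main obstacle is establishing that such an invariant is genuinely recoverable from the permanental polynomial for all $n$. The Faria multiplicity results are stated for integer roots, and their direct permanental analog may yield only an inequality rather than an exact count, so its tightness for this family would have to be proved. An attractive alternative that sidesteps this is induction on $n$ via the reduction Lemma \ref{reductionlemma}: applying it at a pendant edge of $G$ and matching the resulting expansion term-by-term against the recursion satisfied by $\psi(L(C_n\odot K_1);x)$ in Theorem \ref{Coronapoly}, one would hope to force $G$ minus a leaf-and-support pair to be $C_{n-1}\odot K_1$, and hence $G\cong C_n\odot K_1$. The hardest technical points will be (i) controlling the extra cycle terms in Lemma \ref{reductionlemma} that appear once one admits non-target competitors, and (ii) the odd-$n$ signless case, where $C_n\odot K_1$ is non-bipartite and the shortcut $\psi(L)=\psi(Q)$ is unavailable, so the signless recursion of Theorem \ref{Coronapoly} (with its $(-1)^{|V(C)|}$ cycle corrections) must be handled independently.
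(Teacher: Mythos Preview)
The paper does \emph{not} prove this statement: it is stated as a Conjecture and left open, with the preceding paragraph noting precisely the difficulty you identify --- that the number of non-isomorphic graphs with the same degree sequence grows rapidly with $n$. There is therefore no paper proof to compare against; what you have written is a research plan toward an open problem, not a proof.

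As a plan, your set-up is sound. The moment identities you extract are correct, your counterexample $k_1=n-1,\ k_2=3,\ k_3=n-3,\ k_4=1,\ t_G=1$ genuinely shows that the first four coefficients do not determine the degree sequence for large $n$, and your structural reduction (connected, degree sequence $(1^n,3^n)$, triangle-free $\Rightarrow$ unicyclic core with pendants, with $C_n\odot K_1$ the unique instance whose cycle has length $n$) is accurate. The gap is exactly where you locate it: none of your three proposed separating invariants is actually established. You assert that finitely many further coefficients $l_4,l_5,\dots$ would pin down the degree sequence, but you give no argument that the number of coefficients needed stays bounded as $n$ grows, nor a combinatorial interpretation of those coefficients that would let you carry this out uniformly. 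The Faria-multiplicity route and the girth-from-coefficients route are each plausible but, as you concede, the permanental analogs of the relevant spectral facts are not in the literature in the form you need. The inductive route via Lemma~\ref{reductionlemma} has the difficulty you name --- matching an expansion at a pendant edge of an \emph{unknown} $G$ against the recursion for $C_n\odot K_1$ does not obviously force the residual graph to be $C_{n-1}\odot K_1$, because different competitors produce different cycle-correction terms. In short, your outline is a faithful map of the problem and its obstacles, but it does not close any of them; the statement remains a conjecture, consistent with the paper.
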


Next, we consider the family of graphs $C_m\odot\bar K_n$ for various values of $m$. Let us label the vertices of $V(C_m\odot\bar K_n)$ as follows: $\{v_1,v_2,\cdots,v_m\}$ are the vertices of the cycle $C_m$, and $ \{v_{m+(i-1)n+1},v_{m+(i-1)n+2},\cdots, v_{m+in}\}$ are the pendant vertices connected with $v_i$ for $1\le i\le m$. 

\begin{thrm}\label{C3corona-bark}
For the $G=C_{3}\odot \bar K_n$ graph 
\begin{align*}
& \psi(L(G); x)  =  (A^3+3A+2)B^{3n} +3n(A^2+1)B^{3n-1} +3n^3AB^{3n-2} +n^3B^{3n-3},\\
& \psi(Q(G); x)  =  (A^3+3A-2)B^{3n} +3n(A^2+1)B^{3n-1} +3n^3AB^{3n-2} +n^3B^{3n-3},
\end{align*}
where $A = x-(n+2)$, and $B=x-1$ .
\end{thrm}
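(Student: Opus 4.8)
The plan is to run the vertex reduction of Lemma~\ref{reductionlemma}(i) at one of the three cycle vertices, say $v_1$. In $G=C_3\odot\bar{K}_n$ the vertex $v_1$ has degree $n+2$ (its two cycle neighbours $v_2,v_3$ together with its $n$ pendants), and the triangle $C=\{v_1,v_2,v_3\}$ is the unique cycle of $G$ and the only one through $v_1$. Writing $A=x-(n+2)$ and $B=x-1$, and using that all $n$ pendants $w$ of $v_1$ contribute equally by symmetry, the reduction reads
\begin{align*}
\psi(L(G);x)=A\,\psi(L_{v_1}(G))+\psi(L_{v_1v_2}(G))+\psi(L_{v_1v_3}(G))+n\,\psi(L_{v_1w}(G))+2\,\psi(L_{\{v_1,v_2,v_3\}}(G)).
\end{align*}
So the whole problem reduces to evaluating five principal-submatrix permanents, of which the two involving $v_2$ and $v_3$ symmetrically coincide.

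The key simplification is that deleting $v_1$ destroys the only cycle, so each surviving submatrix is the matrix $xI-L$ of a \emph{forest}, with the crucial proviso that its diagonal entries retain the \emph{original} degrees ($A$ on the surviving cycle vertices, $B$ on every pendant), not the degrees of the reduced graph. For the $(xI-L)$-matrix of a forest the permanent collapses to a sum over matchings: each matched edge contributes the product of its two symmetric off-diagonal entries and each unmatched vertex its diagonal. This gives directly $\psi(L_{\{v_1,v_2,v_3\}}(G))=B^{3n}$ (all $3n$ pendants isolated); for $\psi(L_{v_1v_2})$ and $\psi(L_{v_1v_3})$ a single ``arrowhead'' centre with $n$ pendants (times $2n$ isolated pendants), evaluated by the identity $\mathrm{per}=AB^{n}+nB^{n-1}$; and for $\psi(L_{v_1}(G))$ and $\psi(L_{v_1w}(G))$ the ``double star'' $P$ formed by the edge $v_2v_3$ carrying $n$ pendants at each end, multiplied by $B^{n}$ and $B^{n-1}$ respectively. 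A matching expansion of $P$ (sorting the contributions by whether $v_2,v_3$ are each matched to a pendant, matched to one another, or fixed) yields $\mathrm{per}(P)=A^2B^{2n}+2nAB^{2n-1}+n^2B^{2n-2}+B^{2n}$.

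Substituting these into the reduction and collecting by powers of $B$ produces the four announced blocks. The $B^{3n}$, $B^{3n-1}$, and $B^{3n-3}$ coefficients fall out immediately; the $B^{3n-2}$ block is the one to assemble carefully, since it receives contributions from both $A\,\psi(L_{v_1})$ and $n\,\psi(L_{v_1w})$, and this collection step is the main numerical check in the argument. For the signless case the only change is that every off-diagonal edge entry becomes $-1$, so in any forest submatrix each matched edge now contributes $(-1)(-1)=1$, exactly as before; hence all five submatrix permanents are unchanged. The sole difference is the cycle term, which in the signless reduction carries $(-1)^{|V(C)|}=(-1)^3=-1$ and therefore flips the contribution $+2B^{3n}$ to $-2B^{3n}$. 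This turns the leading block from $(A^3+3A+2)B^{3n}$ into $(A^3+3A-2)B^{3n}$ and leaves every lower block intact, which is exactly the stated relationship between $\psi(L(G);x)$ and $\psi(Q(G);x)$.

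The main obstacle is purely the bookkeeping: correctly enumerating the matchings of the double star $P$ while tracking the separate powers of $B$ coming from the isolated pendants, and---most importantly---remembering that a principal submatrix of $L(G)$ keeps the diagonal value $x-(n+2)$ inherited from $G$ rather than the smaller degree the vertex has in the deleted subgraph. Once these two points are handled, no cycles survive to complicate the permanents, so unlike the general $C_m\odot\bar{K}_n$ (which would need the full lattice apparatus of Theorem~\ref{Coronapoly}), the $m=3$ case is essentially closed-form.
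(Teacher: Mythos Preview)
Your proof is correct and follows essentially the same route as the paper: both apply the vertex reduction of Lemma~\ref{reductionlemma}(i) at $v_1$, evaluate the same five principal-submatrix permanents (giving $A^2B^{3n}+2nAB^{3n-1}+n^2B^{3n-2}+B^{3n}$, $AB^{3n}+nB^{3n-1}$, etc.), and assemble them identically. Your matching-on-forests formulation is just a cleaner phrasing of the paper's ``interchange'' count between index sets $X$ and $Y$---in a cycle-free submatrix the only surviving permutations are products of fixed points and transpositions, which is exactly your matching sum.
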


\begin{proof}
Let us first consider the Laplacian case. The reduced matrix $L_{v_1}$ of $G$, obtained by deleting the row and column corresponding to vertex $v_1$, has dimension $n(m+1)-1$. The diagonal entries consist of $x-(n+2)$ appearing twice and $x-1$ appearing $n(m+1)-3$ times, while off-diagonal entries are either $0$ or $1$. This structure leads to a determinant producing terms of the form $(x-3)^a(x-1)^b$, following an approach similar to Theorem~\ref{Coronapoly}. Since the entry $1$ appears only in positions $\{a_{i,3+in+k}, a_{3+in+k,i}\}_{k=0}^{n-1}$ for $i=1,2$, the non-zero permutations occur precisely when $\sigma(i) \in \{3+in+k\}_{k=0}^{n-1}$ for $i=1,2$. Defining index sets $X := \{1, 2\}$ and $Y := \{2+n+1,\dots,2+3n\}$, we observe that: a single interchange within $X$ has one possibility; a single interchange between $X$ and $Y$ has $2n$ possibilities; and two interchanges (between $X$ and $Y$) have $n^2$ possibilities (see Figure \ref{fig:tree-structureC3corona}). This yields the characteristic polynomial:
\begin{align}\label{c2v1laplacian}
    \psi(L_{v_1}(G); x)  = A^2B^{3n}+2nAB^{3n-1} + n^2B^{3n-2} + B^{3n}
\end{align} 

Similarly, we observe that both $L_{v_1v_2}(G)$ and $L_{v_1v_4}(G)$ exhibit analogous lattice structures that decompose from the nodes $(1,3n)$ and $(2,3n-1)$, respectively. For the graph $G$, we obtain the following characteristic polynomials:

\begin{align}
\psi(L_{v_1v_2}(G); x) &= AB^{3n} + nB^{3n-1}, \label{c2v1v2laplacian} \\
\psi(L_{v_1v_4}(G); x) &= A^2B^{3n-1} + 2nAB^{3n-2} + n^2B^{3n-3} + B^{3n-1} \label{c2v1v4laplacian}
\end{align}
Furthermore, the characteristic polynomial for the entire cycle deletion that contains the $v_1$ is simply $\psi(L_{v_1v_2v_3}(G); x) = B^{3n}$. Combining these results (Equation \ref{c2v1laplacian},\ref{c2v1v2laplacian}, and \ref{c2v1v4laplacian}), we derive the complete Laplacian characteristic polynomial:
\begin{align*}
    \psi(L(G); x)  &= (x-(n+2)) \psi(L_{v_1}(G); x) + 2 \psi(L_{v_1v_2}(G); x) +n \psi(L_{v_1v_4}(G); x) + 2\psi(L_{v_1v_2v_3}(G); x)\\ 
    &  =A (A^2B^{3n}+2nAB^{3n-1} + n^2B^{3n-2} + B^{3n}) + 2 (AB^{3n} + nB^{3n-1})+n(A^2B^{3n-1} \\ &+ 2nAB^{3n-2} + n^2B^{3n-3} + B^{3n-1}) +2B^{3n} \\ 
    & =(A^{3}+3A+2) B^{3n} + 3n(A^{2}+1) B^{3n-1} + 3n^2AB^{3n-2} + n^3 B^{3n-3}
\end{align*}
The signless Laplacian case follows similarly. 

\end{proof}


\begin{figure}[h!]
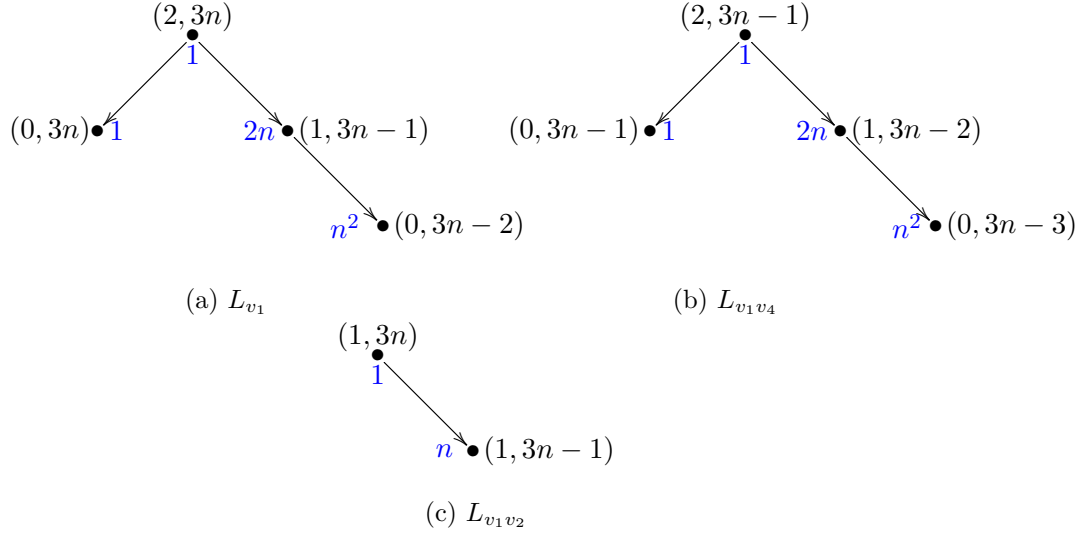

    \centering
     \begin{subfigure}[b]{0.35\textwidth}
         \[ \xygraph{
!{(0,0)}*{\bullet}="A11" !{(0,0.2)}*{(2,3n)}
!{(0,-0.2)}*[blue]{1}
!{(-1,-1)}*{\bullet}="A21" !{(-1.5,-1)}*{(0,3n)}
!{(-.8,-1)}*[blue]{1}
!{(1,-1)}*{\bullet}="A22" !{(1.8,-1)}*{(1,3n-1)}
!{(.7,-1)}*[blue]{2n}
!{(2,-2)}*{\bullet}="A33" !{(2.8,-2)}*{(0,3n-2)}
!{(1.6,-2)}*[blue]{n^2}
"A11": "A21" "A11": "A22" "A22":"A33"  
} 
\]    
     \subcaption{$L_{v_1}$}
     \end{subfigure}
     \hspace{.6 cm}
     \begin{subfigure}[b]{0.35\textwidth}
        \[ \xygraph{
!{(0,0)}*{\bullet}="A11" !{(0,0.2)}*{(2,3n-1)}
!{(0,-0.2)}*[blue]{1}
!{(-1,-1)}*{\bullet}="A21" !{(-1.8,-1)}*{(0,3n-1)}
!{(-.8,-1)}*[blue]{1}
!{(1,-1)}*{\bullet}="A22" !{(1.8,-1)}*{(1,3n-2)}
!{(.7,-1)}*[blue]{2n}
!{(2,-2)}*{\bullet}="A33" !{(2.8,-2)}*{(0,3n-3)}
!{(1.7,-2)}*[blue]{n^2}
"A11": "A21" "A11": "A22" "A22":"A33"  
} 
\]
     \subcaption{$L_{v_1v_4}$}
     \end{subfigure}
       \hspace{.6 cm}
     \begin{subfigure}[b]{0.35\textwidth}
        \[ \xygraph{
!{(0,0)}*{\bullet}="A11" !{(0,0.2)}*{(1,3n)}
!{(0,-0.2)}*[blue]{1}
!{(1,-1)}*{\bullet}="A22" !{(1.8,-1)}*{(1,3n-1)}
!{(.7,-1)}*[blue]{n}
"A11": "A22" 
} 
\]
     \subcaption{$L_{v_1v_2}$}
     \end{subfigure}
     \caption{Recursive Lattice structure showing decomposition for $C_{3}\odot \bar K_n$}
        \label{fig:tree-structureC3corona}
\end{figure}

\begin{thrm}
 $C_{3}\odot \bar K_n$ is determined by its (signless) Laplacian permanental polynomial.     
\end{thrm}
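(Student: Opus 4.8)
The plan is to follow the template of the preceding determination theorems: read off the leading coefficients of the polynomial supplied by Theorem~\ref{C3corona-bark}, convert them into constraints on the degree sequence via Lemma~\ref{firstthreevalues} and Corollary~\ref{corolarydetermination}, and then argue that these constraints leave only $C_3\odot\bar K_n$. Writing $k_i$ for the number of vertices of degree $i$ and $t_G$ for the number of triangles, expanding $\psi(L(C_3\odot\bar K_n);x)$ (respectively $\psi(Q(C_3\odot\bar K_n);x)$) and matching the first four coefficients of any cospectral $G$ yields
\[
\sum_i k_i = 3(n+1),\qquad \sum_i i\,k_i = 6(n+1),\qquad \sum_i i^2 k_i = 3n^2+15n+12,
\]
together with $-6t_G+\sum_i i^3 k_i = 3n^3+18n^2+39n+18$ in the Laplacian case and $6t_G+\sum_i i^3 k_i = 3n^3+18n^2+39n+30$ in the signless case. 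The first three relations are identical for $L$ and $Q$, so I would pin the degree sequence down using only these; only the triangle bookkeeping, which is then automatically consistent, differs between the two polynomials. This is the feature that lets me treat both cases at once.

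The core of the argument is that the three shared relations already force the degree sequence \emph{once one knows every degree is at most $n+2$}. Set $w_i=(i-1)k_i$ for $i\ge 3$. Since the constant term of the polynomial is nonzero, $G$ has no isolated vertex (so $k_0=0$), exactly as in the $C_3\odot K_1$ proof; the relations then give $\sum_{i\ge 3}w_i\le 3(n+1)$ (this is $k_2\ge 0$) and $\sum_{i\ge 3}w_i\,(i-2)=3n(n+1)$. If every degree satisfies $i\le n+2$, then $i-2\le n$, whence
\[
3n(n+1)=\sum_{i\ge 3}w_i\,(i-2)\le n\sum_{i\ge 3}w_i\le 3n(n+1),
\]
and equality throughout forces $k_2=0$, $w_i=0$ for $3\le i<n+2$, and therefore $k_{n+2}=3$ and $k_1=3n$. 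With this degree sequence the three degree-$(n+2)$ vertices carry exactly $m-3n=3$ edges among themselves, which in a simple graph must form a triangle, and each of them then receives $n$ pendant vertices; this reconstructs $C_3\odot\bar K_n$ uniquely and yields $t_G=1$, consistent with both fourth relations. I would present this step once and note that it closes both the Laplacian and the signless Laplacian cases.

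The hard part will be justifying the hypothesis that no vertex has degree exceeding $n+2$. The quadratic relation $\sum_{i\ge 3}(i-1)(i-2)k_i=3n(n+1)$ by itself permits a single vertex of degree $D$ whenever $(D-1)(D-2)\le 3n(n+1)$, i.e. $D$ up to roughly $\sqrt 3\,n$, which overshoots $n+2$ once $n\ge 3$; indeed such sequences genuinely satisfy the first three relations (for $n=1$ the competing sequence $\{1^{2},2^{3},4\}$ is one example). To eliminate them I would feed each large-degree candidate into the fourth relation, which then pins a definite value of $t_G$, and show that this $t_G$ cannot be achieved by any simple graph on $3(n+1)$ vertices with that degree sequence; in the $n=1$ instance above, the forced value $t_G=2$ is not realizable, and I expect the same mechanism to persist. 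This is precisely where the argument must invoke realizability (graphicality) reasoning rather than the bare coefficient identities, and it is the main technical obstacle; the connectivity/constant-term argument then disposes of any residual solutions carrying an isolated vertex, just as in the cycle-corona proofs. Once degrees $\ge n+3$ are excluded, the displayed equality argument of the second paragraph completes the proof for both $\psi(L)$ and $\psi(Q)$ simultaneously.
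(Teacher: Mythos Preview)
Your plan is exactly the paper's plan: read off the first four coefficients, turn them into the moment relations
\[
\sum_i k_i=3n+3,\quad \sum_i ik_i=6n+6,\quad \sum_i i^2k_i=3n^2+15n+12,\quad -6t_G+\sum_i i^3k_i=3n^3+18n^2+39n+18,
\]
and then argue that the only admissible degree sequence is $k_1=3n$, $k_{n+2}=3$, $t_G=1$. The paper's ``key observation'' is the quadratic identity $\sum_i(i-1)(i-n-2)k_i=0$, which is precisely the combination of the first three moments you are using (your chain $3n(n+1)=\sum_{i\ge 3}w_i(i-2)\le n\sum_{i\ge3}w_i\le 3n(n+1)$ is an equivalent rewriting). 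Note that the $6t_G$ appearing in the paper's displayed identity is spurious: $(i-1)(i-n-2)$ is quadratic in $i$, so only the first three moments enter and no triangle term can be present.

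The gap you flag --- ruling out $d_{\max}>n+2$ --- is genuine, and the paper does not close it either. After writing the quadratic identity, the paper simply asserts that ``solving this system uniquely determines'' $k_1=3n$, $k_{n+2}=3$, $t_G=1$; no argument is given to exclude sequences with a vertex of degree exceeding $n+2$. As you observed, such sequences do satisfy the first three moment relations (your $n=1$ example $\{1^2,2^3,4\}$ is one; for $n=2$ the sequence $\{1^5,2^2,4,5\}$ is another), and only the fourth relation together with a realizability check can dispose of them. So your proposal is, if anything, more candid than the published proof: you have the same argument the paper has, you have isolated the same weak point, and the paper offers nothing beyond what you already wrote to get past it. Carrying out the case elimination you describe (forcing an unrealizable $t_G$ for each competing sequence) would genuinely strengthen the result over what the paper proves; as it stands, neither your sketch nor the paper's proof is complete on this point.
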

\begin{proof}
Let $G$ be a graph having the same Laplacian permanental polynomial as $C_{3}\odot \bar K_n$. Then, from Theorem \ref{C3corona-bark}, after a tedious derivation, we get the following polynomial: 
\begin{align*}
     \psi(L(G); x) & =  x^{3n+3} -(6n+6)x^{3n+2} +\frac{33n^2+63n+30}{2}x^{3n+1} -(28n^3+75n^2+61n+12) x^{3n}\\ &+\cdots+ (1)^{n-1}(8n^3+24n^2+30n+12)
\end{align*}
From this Laplacian polynomial, using the Lemma \ref{firstthreevalues}, we have 
\begin{align*}
    \sum_{i=0}^4 k_i = 3n+3, \quad
    \sum_{i=0}^4 i k_i = 3n+3, \quad
    \sum_{i=0}^4 i^2 k_i = 3n^2+17n +12, \quad & \\ 
     -6t_G + \sum_{i=0}^4 i^3 k_i = 3n^3 + 18n^2 + 39n+ 18. 
\end{align*}
 A key observation follows from these equations: 
\begin{equation}
    6t_G - \sum_{i=0}^{d_{\max}} (i-1)(i-n-2)k_i = 0. \label{4eq:key}
\end{equation}

Solving this system uniquely determines the non-negative integer solution: $t_G = 1, k_0=k_2=\cdots=k_{n+1}=0,k_1=3n$, and $k_{n+2}=3$. This degree sequence describes a graph with $3n$ vertices of degree $1$, and three vertices of degree $n+2$. The three vertices of degree $n+2$ must form a triangle, each adjacent to $n$ pendant vertices (degree $1$). Consequently, the only graph realizing this degree sequence is $C_{3}\odot \bar K_n$, which is uniquely determined. For the signless Laplacian, an analogous procedure follows.
\end{proof}

\begin{thrm}\label{C4corona-bark}
For the graph $C_{4}\odot \bar K_n$, 
\begin{align*}
& \psi(L(C_{4}\odot \bar K_n); x)  =  (A^2+2)^2B^{4n} +2nA (2A^2+3)B^{4n-1} + 2n^2(3A^2+1) + 4n^3AB^{4n-3} +n^4B^{4n-4}\\
& \psi(Q(C_{4}\odot \bar K_n); x)  =  A^2(A^2+4)B^{3n} +2nA (2A^2+3)B^{4n-1} + 2n^2(3A^2+1) + 4n^3AB^{4n-3} +n^4B^{4n-4}
\end{align*}

where $A = x-(n+2)$, and $B=x-1$.
\end{thrm}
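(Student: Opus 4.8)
Since $C_4\odot\bar K_n$ is bipartite, Faria's theorem (the bipartite equality quoted above) gives $\psi(L(G);x)=\psi(Q(G);x)$, so it suffices to compute the Laplacian permanental polynomial and the signless formula then coincides with it. My plan is to apply the vertex reduction (Lemma~\ref{reductionlemma}(i)) at a cycle vertex $v_1$, exactly as in Theorems~\ref{Coronapoly} and~\ref{C3corona-bark}. Here $d(v_1)=n+2$, so $x-d(v_1)=A$; the neighborhood $N(v_1)$ consists of the two cycle vertices $v_2,v_4$ together with the $n$ pendants of $v_1$; and the only cycle through $v_1$ is the $4$-cycle, for which $\psi(L_{V(C_4)}(G);x)=B^{4n}$, since deleting the four cycle vertices leaves $4n$ isolated pendants of diagonal weight $B$. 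Using the reflection symmetry of $C_4$ (so the two cycle-neighbor deletions agree) and the symmetry among the pendants, the lemma collapses to
\[
\psi(L(G);x)=A\,\psi(L_{v_1}(G);x)+2\,\psi(L_{v_1v_2}(G);x)+n\,\psi(L_{v_1w}(G);x)+2B^{4n},
\]
where $w$ is one pendant of $v_1$.

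To evaluate the three submatrix permanents I would use the principle underlying the earlier corona proofs: in $xI-L$ the only nonzero off-diagonal entries sit on edges of $G$, and deleting $v_1$ breaks the unique cycle, leaving a forest, so each submatrix permanent is a weighted matching sum over that forest. Writing $\alpha:=AB^{n}+nB^{n-1}$ for the contribution of an \emph{available} cycle vertex (unmatched, weight $AB^{n}$, or matched to one of its $n$ pendants, weight $nB^{n-1}$) and $\beta:=B^{2n}$ for a used edge between two cycle vertices (which frees both of their pendant stars), the forests left by the three deletions are paths on the surviving cycle vertices ($v_2v_3v_4$ for the $v_1$- and $v_1w$-deletions, $v_3v_4$ for the $v_1v_2$-deletion) together with some isolated pendants. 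This yields the closed forms
\[
\psi(L_{v_1}(G);x)=B^{n}(\alpha^3+2\alpha\beta),\quad \psi(L_{v_1w}(G);x)=B^{n-1}(\alpha^3+2\alpha\beta),\quad \psi(L_{v_1v_2}(G);x)=B^{2n}(\alpha^2+\beta),
\]
where the leading powers of $B$ record the now-isolated pendants of the deleted vertices, and $\alpha^3+2\alpha\beta$, $\alpha^2+\beta$ are the weighted matching polynomials of the paths $P_3$ and $P_2$ on the surviving cycle vertices; the underlying lattice bookkeeping is the one in Figure~\ref{fig:tree-structureC3corona}, with the starting nodes shifted to reflect the longer surviving path. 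Substituting and using $AB^{n}+nB^{n-1}=\alpha$ to combine the first and third terms gives the compact form $\psi(L(G);x)=\alpha^4+4\alpha^2\beta+4B^{4n}$, and expanding $\alpha$ and $\beta$ in $A,B,n$ and collecting by powers of $B$ produces the polynomial in the statement.

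I expect the main obstacle to be purely the coefficient bookkeeping. Deleting $v_1$ from $C_4$ leaves a path on \emph{three} high-degree vertices, rather than the two that occur for $C_3$, so the matching/lattice expansion carries strictly more branches, and the triangular-number multiplicities together with the paired powers of $A$ and $B$ must be tracked precisely; a single miscount contaminates every coefficient of the answer. The one genuinely new structural feature, compared with Theorem~\ref{C3corona-bark}, is the parity of the cycle: because $|V(C_4)|=4$ is even, the factor $(-1)^{|V(C_4)|}$ in the signless version of Lemma~\ref{reductionlemma} equals $+1$, so the cycle-deletion term $2B^{4n}$ enters the Laplacian and signless expansions with the \emph{same} sign. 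This is why no sign flip appears between the two polynomials here, in contrast to the odd cycle $C_3$, and it is consistent with the bipartite identity $\psi(L(G);x)=\psi(Q(G);x)$ recorded at the outset.
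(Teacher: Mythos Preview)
Your strategy coincides with the paper's: apply Lemma~\ref{reductionlemma}(i) at the cycle vertex $v_1$ and evaluate the three auxiliary permanents $\psi(L_{v_1})$, $\psi(L_{v_1v_2})$, $\psi(L_{v_1w})$ by the matching/lattice count on the surviving forest, then assemble together with the cycle term $2B^{4n}$. Your $\alpha,\beta$ shorthand is simply a tidy repackaging of the paper's term-by-term lattice bookkeeping, and the values $\psi(L_{v_1})=B^{n}(\alpha^3+2\alpha\beta)$ and $\psi(L_{v_1v_2})=B^{2n}(\alpha^2+\beta)$ agree verbatim with the paper's explicit formulas once expanded.

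There is, however, a genuine discrepancy at the final step that you should flag rather than gloss over. Your $\psi(L_{v_1w})=B^{n-1}(\alpha^3+2\alpha\beta)$ is correct---it equals $B^{-1}\psi(L_{v_1})$, since removing the pendant $w$ merely strips one isolated $B$ factor---but the paper's corresponding $\psi(L_{v_1v_5})$ keeps only the $B^{n-1}\alpha^3$ part and drops the $2\alpha\beta$ contribution. That omission propagates into the statement: expanding your $(\alpha^2+2\beta)^2$ yields $B^{4n-1}$ and $B^{4n-2}$ coefficients $4nA(A^2+2)$ and $2n^2(3A^2+2)$, not the printed $2nA(2A^2+3)$ and $2n^2(3A^2+1)$. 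A sanity check at $n=1$ (so $C_4\odot\bar K_1=C_4\odot K_1$) confirms this: your closed form gives constant term $324$, matching the $C_4\odot K_1$ polynomial quoted elsewhere in the paper, whereas the coefficients in the present statement give $316$. Hence your last sentence, that the expansion ``produces the polynomial in the statement,'' is not literally true; the displayed formula carries an arithmetic slip, and your $(\alpha^2+2\beta)^2$ is the correct closed form. Your bipartite observation is likewise correct and already shows that the two distinct displayed $\psi(L)$ and $\psi(Q)$ expressions cannot both stand---they must coincide.
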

\begin{proof}
Consider the reduced Laplacian matrix $L_{v_1}$ of $G= C_{4}\odot \bar K_n$, obtained by removing the row and column corresponding to the vertex $v_1$. As it is very similar to the previous Theorem, we only need to identify that for index sets $X := \{1, 2, 3\}$ and $Y := \{3+n+1,\dots,3+4n\}$, which yields the characteristic polynomial (see the Lattice structure in Figure \ref{fig:tree-structureC4corona}):
\begin{align}\label{c4v1laplacian}
    \psi(L_{v_1}(G); x)  = A^3B^{4n}+3n A^2 B^{4n-1} +3n^2 A B^{4n-2} + n^3B^{4n-3} + 2A B^{4n}+ 2n B^{4n-1}
\end{align} 
Similarly, we observe that both $L_{v_1v_2}(G)$ and $L_{v_1v_5}(G)$ exhibit analogous lattice structures that decompose from the nodes $(2,4n)$ and $(3,4n-1)$, respectively. For the graph $G$, we obtain the following characteristic polynomials:

\begin{align}
\psi(L_{v_1v_2}(G); x) &= A^2 B^{4n} + 2n A B^{4n-1} + n^2 B^{4n-2} + B^{4n} , \label{c4v1v2laplacian} \\
\psi(L_{v_1v_5}(G); x) &= A^3 B^{4n-1} + 3n A^2 B^{4n-2} + 3n^2 A B^{4n-3} +n^3 B^{4n-4} \label{c4v1v5laplacian}
\end{align}
Furthermore, the characteristic polynomial for the entire cycle deletion that contains the $v_1$ is simply $\psi(L_{v_1v_2v_3v_4}(G); x) = B^{4n}$. Combining these results (Equation \ref{c4v1laplacian},\ref{c4v1v2laplacian}, and \ref{c4v1v5laplacian}), we derive the complete Laplacian characteristic polynomial:
\begin{align*}
    \psi(L(G); x)  &= (x-(n+2)) \psi(L_{v_1}(G); x) + 2 \psi(L_{v_1v_2}(G); x) +n \psi(L_{v_1v_5}(G); x) + 2\psi(L_{v_1v_2v_3}(G); x)\\ 
    & = A (A^3B^{4n}+3n A^2 B^{4n-1} +3n^2 A B^{4n-2} + n^3B^{4n-3} + 2A B^{4n}+ 2n B^{4n-1}  \\  & + 2 (A^2 B^{4n} + 2n A B^{4n-1} + n^2 B^{4n-2} + B^{4n}) \\ 
    & + n(A^3 B^{4n-1} + 3n A^2 B^{4n-2} + 3n^2 A B^{4n-3} +n^3 B^{4n-4}) +2B^{4n} \\ 
    & =(A^{4}+4A^2+4) B^{4n} + (4nA^{3}+6nA) B^{4n-1} + (6n^2 A^2 +2n^2) B^{4n-2} + 4n^3 A B^{4n-3} \\ & +n^4B^{4n-4}\\
    & =(A^2+2)^2B^{4n} +2nA (2A^2+3)B^{4n-1} + 2n^2(3A^2+1) + 4n^3AB^{4n-3} +n^4B^{4n-4}
\end{align*}
The case for the signless Laplacian is treated similarly. 
\end{proof}


\begin{figure}[h!]
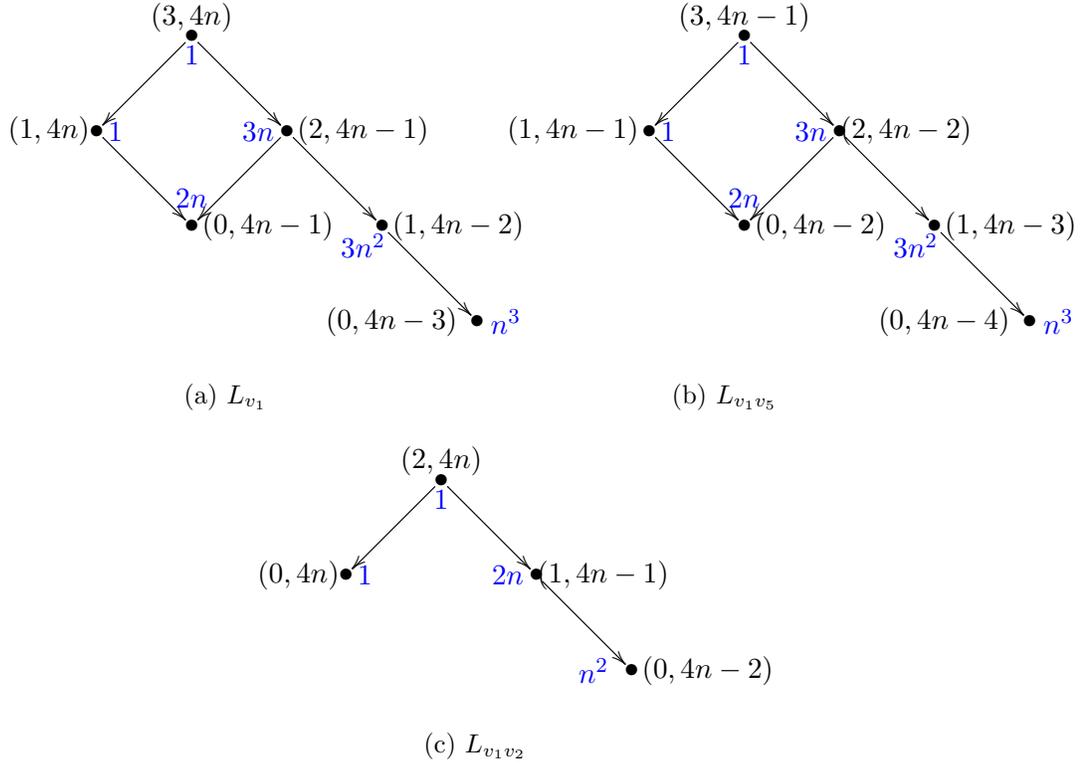

    \centering
     \begin{subfigure}[b]{0.35\textwidth}
         \[ \xygraph{
!{(0,0)}*{\bullet}="A11" !{(0,0.2)}*{(3,4n)}
!{(0,-0.2)}*[blue]{1}
!{(-1,-1)}*{\bullet}="A21" !{(-1.5,-1)}*{(1,4n)}
!{(-0.8,-1)}*[blue]{1}
!{(1,-1)}*{\bullet}="A22" !{(1.8,-1)}*{(2,4n-1)}
!{(0.7,-1)}*[blue]{3n}
!{(0,-2)}*{\bullet}="A32" !{(0.8,-2)}*{(0,4n-1)}
!{(0,-1.7)}*[blue]{2n}
!{(2,-2)}*{\bullet}="A33" !{(2.8,-2)}*{(1,4n-2)}
!{(1.8,-2.2)}*[blue]{3n^2}
!{(3,-3)}*{\bullet}="A44" !{(2.1,-3)}*{(0,4n-3)}  
!{(3.3,-3)}*[blue]{n^3}  
"A11": "A21" "A11": "A22"
"A21":"A32"  "A22":"A32"  "A22":"A33"  
"A33":"A44"
} 
\]    
     \subcaption{$L_{v_1}$}
     \end{subfigure}
     \hspace{.6 cm}
     \begin{subfigure}[b]{0.35\textwidth}
        \[ \xygraph{
!{(0,0)}*{\bullet}="A11" !{(0,0.2)}*{(3,4n-1)}
!{(0,-0.2)}*[blue]{1}
!{(-1,-1)}*{\bullet}="A21" !{(-1.8,-1)}*{(1,4n-1)}
!{(-0.8,-1)}*[blue]{1}
!{(1,-1)}*{\bullet}="A22" !{(1.7,-1)}*{(2,4n-2)}
!{(0.7,-1)}*[blue]{3n}
!{(0,-2)}*{\bullet}="A32" !{(0.8,-2)}*{(0,4n-2)}
!{(0,-1.7)}*[blue]{2n}
!{(2,-2)}*{\bullet}="A33" !{(2.8,-2)}*{(1,4n-3)}
!{(1.8,-2.2)}*[blue]{3n^2}
!{(3,-3)}*{\bullet}="A44" !{(2.1,-3)}*{(0,4n-4)}  
!{(3.3,-3)}*[blue]{n^3}  
"A11": "A21" "A11": "A22"
"A21":"A32"  "A22":"A32"  "A22":"A33"  
"A33":"A44"
} 
\]
     \subcaption{$L_{v_1v_5}$}
     \end{subfigure}
       \hspace{.6 cm}
     \begin{subfigure}[b]{0.35\textwidth}
        \[ \xygraph{
!{(0,0)}*{\bullet}="A11" !{(0,0.2)}*{(2,4n)}
!{(0,-0.2)}*[blue]{1}
!{(-1,-1)}*{\bullet}="A21" !{(-1.5,-1)}*{(0,4n)}
!{(-0.8,-1)}*[blue]{1}
!{(1,-1)}*{\bullet}="A22" !{(1.7,-1)}*{(1,4n-1)}
!{(0.7,-1)}*[blue]{2n}
!{(2,-2)}*{\bullet}="A33" !{(2.8,-2)}*{(0,4n-2)}
!{(1.6,-2)}*[blue]{n^2}
"A11": "A21" "A11": "A22"
"A22":"A33"  
} 
\]
     \subcaption{$L_{v_1v_2}$}
     \end{subfigure}
     \caption{Recursive Lattice structure showing decomposition for $C_{4}\odot \bar K_n$}
        \label{fig:tree-structureC4corona}
\end{figure}

\begin{thrm}
 $C_{4}\odot \bar K_n$ is determined by its (signless) Laplacian permanental polynomial.     
\end{thrm}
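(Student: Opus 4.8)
The plan is to follow the template of the $C_3\odot\bar K_n$ argument, reading off the first four coefficients and forcing the degree sequence; the genuine difficulty will be that the triangle term enters the Laplacian identity with the ``wrong'' sign, so that case needs one extra structural input. First I would expand the polynomial of Theorem~\ref{C4corona-bark} and apply Lemma~\ref{firstthreevalues} and Corollary~\ref{corolarydetermination}: any graph $G$ sharing the relevant polynomial with $C_4\odot\bar K_n$ satisfies
\[
\sum_i k_i=4n+4,\qquad \sum_i i\,k_i=8n+8,\qquad \sum_i i^2k_i=4n^2+20n+16,
\]
together with $-6t_G+\sum_i i^3k_i=4n^3+24n^2+52n+32$ for the Laplacian polynomial and $6t_G+\sum_i i^3k_i=4n^3+24n^2+52n+32$ for the signless one. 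Since $C_4\odot\bar K_n$ is connected, its constant term is nonzero, so Liu's connectedness criterion \cite{liu2019signless} makes $G$ connected (in particular $k_0=0$); as $|V(G)|=|E(G)|=4n+4$, the graph $G$ is unicyclic, whence $t_G\le1$, with the lone cycle a triangle exactly when $t_G=1$.

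Everything then hinges on one weighted identity. Put $\alpha_i:=(i-1)\bigl(i-(n+2)\bigr)$; the first three moments give $\sum_i\alpha_i k_i=0$, and a routine substitution of the four invariant values into the cubic combination shows that $\sum_i (i-1)\bigl(i-(n+2)\bigr)^2 k_i$ equals $6t_G$ in the Laplacian case and $-6t_G$ in the signless case. For every $i\ge1$ the summand $(i-1)(i-(n+2))^2$ is nonnegative and vanishes exactly at $i\in\{1,\,n+2\}$. In the signless case the left-hand side is $\ge0$ while the right-hand side is $\le0$, so both vanish: this at once yields $t_G=0$ and $k_i=0$ for all $i\notin\{1,n+2\}$, i.e.\ $k_1=4n$ and $k_{n+2}=4$.

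The Laplacian case is the main obstacle, since there the identity only reads $\sum_i(i-1)(i-(n+2))^2k_i=6t_G\ge0$ and does not collapse by itself. To close it I would exploit $t_G\le1$. Because $(i-1)(i-(n+2))^2=|\alpha_i|\,|i-(n+2)|\ge|\alpha_i|$ for all $i\ge1$, the left side dominates $\sum_i|\alpha_i|k_i$; and $\sum_i\alpha_i k_i=0$ forces the high-degree surplus $\sum_{i\ge n+3}\alpha_i k_i$ to equal the low-degree deficit, their common value $P$ satisfying $6t_G\ge2P$, so $P\le 3t_G\le3$. But a single vertex of degree $\ge n+3$ already contributes at least $\alpha_{n+3}=n+2$ to that surplus, and $n+2>3$ once $n\ge2$ (the case $n=1$ being $C_4\odot K_1$, already treated). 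Hence there is no vertex of degree $\ge n+3$; then $P=0$ forces $k_i=0$ for $2\le i\le n+1$ as well, and the identity gives $t_G=0$, so again $k_1=4n,\ k_{n+2}=4$.

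In either case I would conclude structurally. A leaf cannot lie on a cycle, so the unique cycle of the unicyclic graph $G$ passes only through the degree-$(n+2)$ vertices; with exactly four of them and no triangle it must be a $4$-cycle on all four, and unicyclicity forbids chords. Each of these vertices then needs $n$ further neighbours, which are necessarily leaves, and connectedness forces every one of the $4n$ leaves to attach to a cycle vertex, $n$ per vertex. This is precisely $C_4\odot\bar K_n$, so $G\cong C_4\odot\bar K_n$, completing both the Laplacian and the signless determination.
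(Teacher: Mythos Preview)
Your proof is correct and follows the same moment-method strategy as the paper: extract the four invariants from Theorem~\ref{C4corona-bark} via Lemma~\ref{firstthreevalues}, force the degree sequence $k_1=4n$, $k_{n+2}=4$ with $t_G=0$, and finish structurally. The one substantive difference is that the paper, after writing a key identity (printed with the quadratic weight $(i-1)(i-n-2)$ rather than your cubic $(i-1)(i-n-2)^2$), simply asserts that the system has a unique non-negative integer solution and does not separate the Laplacian and signless cases; you correctly observe that for the Laplacian the identity $\sum_i(i-1)(i-(n+2))^2k_i=6t_G$ does not collapse by sign alone, and you supply the missing argument (connectedness $\Rightarrow$ unicyclic $\Rightarrow t_G\le1$, then $6t_G\ge\sum_i|\alpha_i|k_i=2P$ to exclude degrees $\ge n+3$). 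Your structural conclusion is likewise the paper's, argued in more detail. So this is not a different route but a more rigorous execution of the same one, and in the Laplacian case it genuinely fills a step the paper glosses over.
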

\begin{proof}
Let $G$ be a graph having the same Laplacian permanental polynomial as $C_{4}\odot \bar K_n$. By applying Theorem \ref{C4corona-bark}, a detailed computation yields the following polynomial: 
\begin{align*}
     \psi(L(G); x) & =  x^{4n+4} -(8n+8)x^{4n+3} +(30n^2+58n+28) x^{4n+2} -\frac{212n^3+588n^2+520n+144}{3} x^{4n+1}\\ &+\cdots-(13n^2-4n-36)
\end{align*}
From this Laplacian polynomial, using the Lemma \ref{firstthreevalues}, we have 
\begin{align*}
    &\sum_{i=0}^4 k_i = 4n+4, 
    \sum_{i=0}^4 i k_i = 4n+4, 
    \sum_{i=0}^4 i^2 k_i = 4n^2+20n + 16,  &\\
    &-6t_G + \sum_{i=0}^4 i^3 k_i = 4n^3 + 24 n^2 + 52n+ 32. 
\end{align*}
A key observation follows from these equations: 
\begin{equation}
    6t_G - \sum_{i=0}^{d_{\max}} (i-1)(i-n-2)k_i = 0. \label{4eq:key}
\end{equation}

Solving this system uniquely determines the non-negative integer solution: $t_G = 0, k_0=k_2=\cdots=k_{n+1}=0,k_1=4n$, and $k_{n+2}=4$. This degree sequence describes a graph with $4n$ vertices of degree $1$, and four vertices of degree $n+2$. There is only one possible simple connected graph with this degree sequence, which is four vertices forming a cycle, and each having exactly $n$ many pendant vertices. Therefore, the graph is uniquely determined as $C_{4}\odot \bar K_n$. For the signless Laplacian case, it follows similarly.
\end{proof}


\bibliographystyle{amsplain}

\end{document}